\tikzstyle{vertex}=[circle, draw, inner sep=0pt, minimum size=6pt]
\theoremstyle{plain}
\newtheorem{thm}{Theorem}[section]
\newtheorem{lem}[thm]{Lemma}
\theoremstyle{definition}
\newtheorem{exmp}[thm]{Example}
\theoremstyle{remark}
\newtheorem{rem}[thm]{Remark}
\newcommand{\g}{\Gamma}
\begin{document}


\title{Correlation Clustering for General Graphs}

\author[L. Parsaei-Majd]{Leila Parsaei-Majd}

\address{L. Parsaei-Majd, Hasso Plattner Institute, University of Potsdam, Germany}

\email{leila.parsaei84@yahoo.com}




\begin{abstract}
Correlation clustering provides a method for separating the vertices of a signed graph into the optimum number of clusters without specifying that number in advance. The main goal in this type of clustering is to minimize the number of disagreements: the number of negative edges inside clusters plus the number of positive edges between clusters. 
In this paper, we present an algorithm for correlation clustering in general case. Also, we show that there is a necessary and sufficient condition under which the lower bound, maximum number of edge disjoint weakly negative cycles, is equal to minimum number of disagreements. Finally, we prove that the presented algorithm gives a $2$-approximation for a subclass of signed graphs.

\end{abstract}




\keywords{Correlation Clustering, Signed Spectral Clustering, Signed Graphs}


\maketitle


\section{Introduction}
We can say clustering is the most important tool for partitioning data points into clusters based on their similarity. \textit{Correlation clustering} provides a method for separating the vertices of a signed graph into the optimum number of clusters without specifying that number in advance. 
Let $G$ be a graph with vertex set $V$ ($|V|=n$) and edge set $E$.
All graphs considered in this paper are undirected, finite, and simple (without loops or multiple edges). A {\em signed graph} is a graph in which every edge has a sign $+$ or $-$. In fact, a signed graph $\g$ is a pair $(G, \sigma)$, where $G=(V, E)$ is a graph, called the underlying graph, and $\sigma : E \rightarrow \{-1, +1\}$ is the sign function
or signature. Often, we write $\g=(G, \sigma)$ to mean that the underlying graph is $G$. Let $H$ be a signed graph. We say $H$ is a \textit{forbidden subgraph} of $\g$ if $\g$ does not contain $H$ as a subgraph. The main goal in this type
of clustering is separation of vertices of a signed graph into clusters in order to minimize the number of disagreements: the number of negative edges inside clusters and
the number of positive edges between clusters. Of course, we can interpret a positive edge and a negative edge as the similarity and dissimilarity of the corresponding vertices, respectively, and we want to cluster similar vertices into the same groups and dissimilar vertices into different groups. Correlation clustering was introduced by Bansal, Blum, and Chawla in \cite{cc}, and they obtained a $3$-approximation algorithm for complete graphs. Also, authors in \cite{cc15} gained a $3$-approximation algorithm for complete $k$-partite graphs. Very recently, Cohen-Addad et al. provided $a \sim 2 - 2/13 < 1.847$-approximation in sub-linear ($\tilde{\mathcal{O}}(n)$) sequential time or in sub-linear ($\tilde{\mathcal{O}}(n)$) space in the streaming setting using an efficient combinatorial algorithm \cite{cc24}. \\
Now, we are going to recall the results for general graphs (not necessarily complete graphs). In 2006, Demaine et al. in \cite{cc06} gave an $\mathcal{O}(\mathrm{log}n)$-approximation algorithm for general graphs based on a linear-programming rounding and the region-growing technique introduced in the seminal paper of Leighton and Rao \cite{[17]} on multicommodity max-flow min-cut theorems. The authors have shown that the problem for general graphs is equivalent to minimum multicut problem, so it is APX-hard and difficult to approximate better than $\Theta(\mathrm{log}n)$. Moreover, they provided an $\mathcal{O}(r^3)$-approximation algorithm for $K_{r,r}$-minor-free graphs which uses a rounding technique introduced by Tardos and Vazirani \cite{[27]} in a paper on max-flow min-multicut ratio and based on a lemma of Klein et al. \cite{[16]}. In the following, we recall a new result from \cite{ccbound}. 
Consider $V$ as a vertex set of size $n$ and $E \subseteq V \times V$ as the edge set. There are two nonnegative, real-valued weights $w^{+}_{uv}, w^{-}_{uv}$ for every (unordered) edge $(u, v) \in E$. Any “positive” $w^{+}_{uv}$ (resp. “negative” $w^{-}_{uv}$) weight expresses the benefit of clustering $u$ and $v$ together (resp. separately). This input can equivalently be represented as a graph $G$ with vertex set $V$ and edge set $E$ with edge weights $w^{+}_{uv}, w^{-}_{uv}$, for $(u, v) \in E$. The goal of correlation clustering is to find a clustering (i.e., an injective function expressing clustermembership) $\mathcal{C} : V \rightarrow \Bbb{N}^{+}$ that minimizes 
$$\sum_{(u,v) \in E,\mathcal{C}(u)=\mathcal{C}(v)} w^{-}_{uv} + \sum_{(u,v) \in E,\mathcal{C}(u)\neq \mathcal{C}(v)} w^{+}_{uv}.$$
\textbf{Global Weight Bound (GWB)} condition is the following equality:
$${\binom n2}^{-1} \sum_{(u,v) \in E} w^{+}_{uv} + {\binom n2}^{-1} \sum_{(u,v) \in E} w^{-}_{uv} \geqslant \max_{(u,v)\in E}\mid w^{+}_{uv} - w^{-}_{uv} \mid.$$
In 2021, Mandaglio et al. proved for general graphs satisfying Global Weight Bound (GWB) condition, there is a $5$-approximation algorithm for correlation clustering. In fact, they consider a wide range of edge weights for general graphs, and they presented an interesting result that leads to a $5$-approximate algorithm.
According to above notes, we can consider a signed graph with the vertex set $V$ and the edge set $E$, as a graph with edge weight $( w^{+}_{uv},  w^{-}_{uv}) \in \{(1, 0), (0, 1)\}$ for $(u, v) \in E$. 
However, signed graphs (not signed complete graphs) do not satisfy the GWB condition since the number of edges in a graph with $n$ vertices is ${\binom n2}$ if and only if the graph is complete. Therefore, if the underlying graph is not complete, the GWB condition cannot hold for the signed graphs. \\
A typical application of correlation clustering is clustering web pages based on similarity scores. Moreover, correlation clustering is applicable to a multitude of other problems in different domains, including duplicate detection and similarity joins, spam detection, co-reference resolution, biology, image segmentation, social networks, clustering aggregation, and
experiment design. We refer readers to \cite{ccbook22} for more details on the mentioned applications of correlation clustering. 

In this paper, we focus on signed general graphs and present an algorithm (C.C Algorithm) for correlation clustering. We know that maximum number of edge disjoint weakly negative cycles (with exactly one negative edge) is a lower bound for minimum number of disagreements. We provide a necessary and sufficient condition that this lower bound is equal to the minimum number of disagreements. Finally, we prove that C.C Algorithm gives a $2$-approximation for a subclass of signed graphs forbidding certain structures. Consider a signed graph $\g=(G, \sigma)$. $E^{+}$ and $E^{-}$ are the sets of positive and negative edges. The positive and negative subgraphs,
$$\g ^{+}=(V, E^{+}) \quad \text{and}\quad \g ^{-}=(V, E^{-}),$$
are unsigned graphs. The degree of a vertex has several generalizations to signed graphs. The
\textit{underlying degree} $d_{G}(v)$ is the degree in the underlying undirected graph, that is, the total number of
edges incident with $v$. The \textit{positive} or \textit{negative degree}, $d_{\g}^{+}(v)$ or $d_{\g}^{-}(v)$ is the degree in the positive subgraph ${\g}^{+}$ or the negative subgraph ${\g}^{-}$, respectively. 
Moreover, we can define the positive and negative degree of two subsets $V_1$ and $V_2$ of $V$, and denote them by $D^{+}_{\g}(V_1, V_2)$ and $D^{-}_{\g}(V_1, V_2)$, respectively. 
When $\g$ is clear from the context, for simplicity we can write $D^{+}(V_1, V_2)$ and $D^{-}(V_1, V_2)$. 
For a subset $S$ of $V$, the notation $[S]$ means an induced subgraph of $\g$ with the vertex set $S$ and the edge set containing all edges among vertices in $S$. An induced cycle is a cycle with no chord.  
The sign of a walk $W = e_1 e_2 \cdots e_{\ell}$ is the product of its edge signs: 
$$\sigma(W) := \sigma(e_1)\sigma(e_2) \cdots \sigma(e_{\ell}).$$
Thus, a walk is either positive or negative, depending on whether it has an even or
odd number of negative edges. A cycle is a closed walk, so cycles in a signed graph
based on the product of their edge signs are considered either positive or negative. In this paper, we call a cycle 'strongly positive' or 'weakly negative' if all of the edges are positive or it contains exactly one negative edge, respectively. We say two cycles are \textit{adjacent} if they have at least one common edge.

The concept of correlation clustering for a signed graph $\g$ is to partition $V$ so as to either minimize the sum of intra-cluster negative edges plus the sum of inter-cluster positive edges (min-disagreement), or maximize the sum of intra-cluster
positive edges plus the sum of inter-cluster negative edges (max-agreement).
The two formulations are equivalent in terms of exact optimization
and complexity class (both are \textbf{NP-hard}).

For a positive integer $k$, a signed graph $\g$ is called $k$-clusterable if $V = V_1 \cup \cdots \cup V_k$ such that the sets $V_1, \ldots, V_k$ are pairwise disjoint and all positive edges are inside the clusters $V_i$ for $i=1, \ldots, k$, and all negative edges are between clusters. 

\begin{thm}[{\cite{Davis}}]\label{thm:davis}
	A signed graph $\g$ is $k$-clusterable for some positive integer $k$, if and only if no cycle has exactly one negative edge.
\end{thm}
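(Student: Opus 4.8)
The plan is to prove the two implications separately, with the converse carrying the real content. For the forward direction, suppose $\g$ is $k$-clusterable via clusters $V_1,\dots,V_k$ and write $c(v)\in\{1,\dots,k\}$ for the cluster containing $v$. Since every positive edge lies inside a cluster and every negative edge lies between clusters, an edge $uv$ is negative precisely when $c(u)\neq c(v)$. I would then take an arbitrary cycle $C=v_1v_2\cdots v_m v_1$ and argue by contradiction that it cannot have exactly one negative edge: if the unique negative edge were $v_jv_{j+1}$, then every other edge of $C$ would join two vertices of the same cluster, and chaining these equalities around the remaining path forces $c(v_{j+1})=c(v_{j+2})=\cdots=c(v_j)$, hence $c(v_j)=c(v_{j+1})$, contradicting that $v_jv_{j+1}$ is negative. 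This shows no cycle has exactly one negative edge.

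For the converse, the idea is to read the clustering directly off the positive subgraph. First I would let $V_1,\dots,V_k$ be the vertex sets of the connected components of $\g^{+}=(V,E^{+})$; these are pairwise disjoint and cover $V$, so they form a candidate partition. Every positive edge joins two vertices of the same component, so all positive edges automatically lie inside the clusters. The key step is to verify that no negative edge lies inside a cluster. Suppose $uv\in E^{-}$ with $u$ and $v$ in the same component of $\g^{+}$; then there is a path $P$ from $u$ to $v$ using only positive edges, and $P$ together with the edge $uv$ forms a cycle whose only negative edge is $uv$, contradicting the hypothesis. Hence every negative edge runs between two clusters, and $\g$ is $k$-clusterable with $k$ equal to the number of positive components.

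The main obstacle, and the one place I expect to need genuine care, is confirming that $P\cup\{uv\}$ is a bona fide cycle rather than a degenerate object. Because $\g$ is simple and $uv$ is negative, there is no positive edge between $u$ and $v$, so the positive path $P$ neither uses the edge $uv$ nor reduces to a single edge; it therefore has length at least two, and adjoining $uv$ produces a genuine cycle with exactly one negative edge. Everything else is bookkeeping on cluster labels, so I would regard the construction \emph{clusters $=$ connected components of $\g^{+}$} as the crux of the argument, with the cycle-from-path step as the technical point to state precisely.
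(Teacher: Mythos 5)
Your proof is correct. There is, however, nothing in the paper to compare it against: the paper imports this statement as a cited result of Davis \cite{Davis} and gives no proof of its own, so your write-up supplies an argument the paper omits entirely. What you give is the classical proof of Davis's theorem. The forward direction (chaining cluster-membership equalities along the positive edges of a cycle to force the two endpoints of the unique negative edge into the same cluster) is routine and you execute it correctly. The converse is where the content lies, and your choice of clusters as the connected components of the positive subgraph $\Gamma^{+}$ is exactly the standard construction: positive edges lie inside components by definition, and a negative edge $uv$ inside a component would close a positive $u$--$v$ path into a cycle with exactly one negative edge, contradicting the hypothesis. You are also right to flag the one technical point --- that the path plus $uv$ is a genuine cycle --- and your resolution is sound: since the graph is simple and the edge $uv$ is negative, a shortest positive $u$--$v$ path has length at least two and avoids $uv$, so adjoining $uv$ yields a bona fide cycle. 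The only stylistic refinement worth making explicit is to say ``shortest'' (or ``some simple'') positive path rather than just ``a path,'' so that the resulting closed walk has no repeated vertices; with that word inserted, the argument is complete.
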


In this paper, we show positive edges in blue and negative edges in red in the figures.

\section{An algorithm for correlation clustering}
In this section, we are going to present an algorithm to separate the vertices of a signed graph into clusters.\\

\textbf{C.C Algorithm:} Let $\g$ be a signed graph. We can cluster the vertices of $\g$ as follows:
\begin{itemize}
	\item[(1).] 
	First put isolated vertices and vertices that all edges incident with are negative in separate and new clusters. That is, each of such vertices belongs to a new cluster. Then, remove these vertices from $\g$ in the next step. 
	
	\item[(2).] 
	Put the vertices of each strongly positive triangle in a new cluster and delete its vertices of the updated $\g$ obtained from step (1). If there are some strongly positive triangles which are connected to each other by a common edge or a vertex, we put all of their vertices in a new cluster, and remove their vertices from $\g$ obtained from the previous process.
	
	\item[(3).] 
	Find a maximum matching among the remaining positive edges of the new $\g$ obtained from step (2), and put the vertices of each element of the maximum matching in a new cluster. That is, each of the new clusters contains two vertices. Then, remove the vertices of the maximum matching from the updated $\g$.
	
	\item[(4).] 
	Put each of the remaining vertices of the new $\g$ obtained from step (3) in a new cluster, and add the remaining edges of the initial signed graph $\g$ between clusters or inside them so that all of the vertices and edges of $\g$ include in this clustering. 
	
	\item[(5).] 
	If there are two clusters like $V_1$ and $V_2$, with $D^{-}_{\g}(V_1, V_2) = 0$ and $D^{+}_{\g}(V_1, V_2) >= 1$, merge two clusters with the largest positive degree. Then calculate the new degrees based on the new clusters. Note that in this step we ignore the clusters created in step (1). That is, we do not compute the above values for the clusters created in step (1). 
	
	\item[(6).]
	Repeat step (5) until there is no pair of clusters with zero negative degree and non-zero positive degree.	
\end{itemize}

\begin{exmp}
	As a simple example, consider the signed graph $\g$ given in Fig. \ref{Ex2}. Figures given in Figs. \ref{ccalgo1}, \ref{ccalgo2}, \ref{ccalgo4} show the steps of C.C Algorithm to separate the vertices of $\g$ into four clusters. To further explain C.C Algorithm in details, we apply it step by step to $\g$. In $\g$ there is no isolated vertex but all edges incident with $v_8$ are negative, so we put $v_8$ in a new cluster $V_1$. Then we remove $v_8$ from $\g$. $v_1 v_3 v_7$ is the only strongly positive triangle in $\g$, so by step (2) we put $v_1, v_3$ and $v_7$ in a cluster $V_2$, see part (a) in Fig. \ref{ccalgo1}. Remove vertices $v_1, v_3$ and $v_7$ of $\g$, and the new $\g$ obtained from step (1) and step (2) is given in part (a$'$) in Fig. \ref{ccalgo1}. Now, we should find a maximum matching among the positive edges of the updated $\g$. The maximum matching contains just the positive edge $v_4v_5$, and by step (3), we put vertices $v_4$ and $v_5$ in a new cluster $V_3$, see part (b) in Fig. \ref{ccalgo2}. Remove vertices $v_4$ and $v_5$ from the updated $\g$ in part (a$'$) in Fig. \ref{ccalgo1}. By step (4), put each of the remaining vertices (of 
\begin{figure}[!htb]
	\minipage{0.56\textwidth}
	\includegraphics[width=\linewidth]{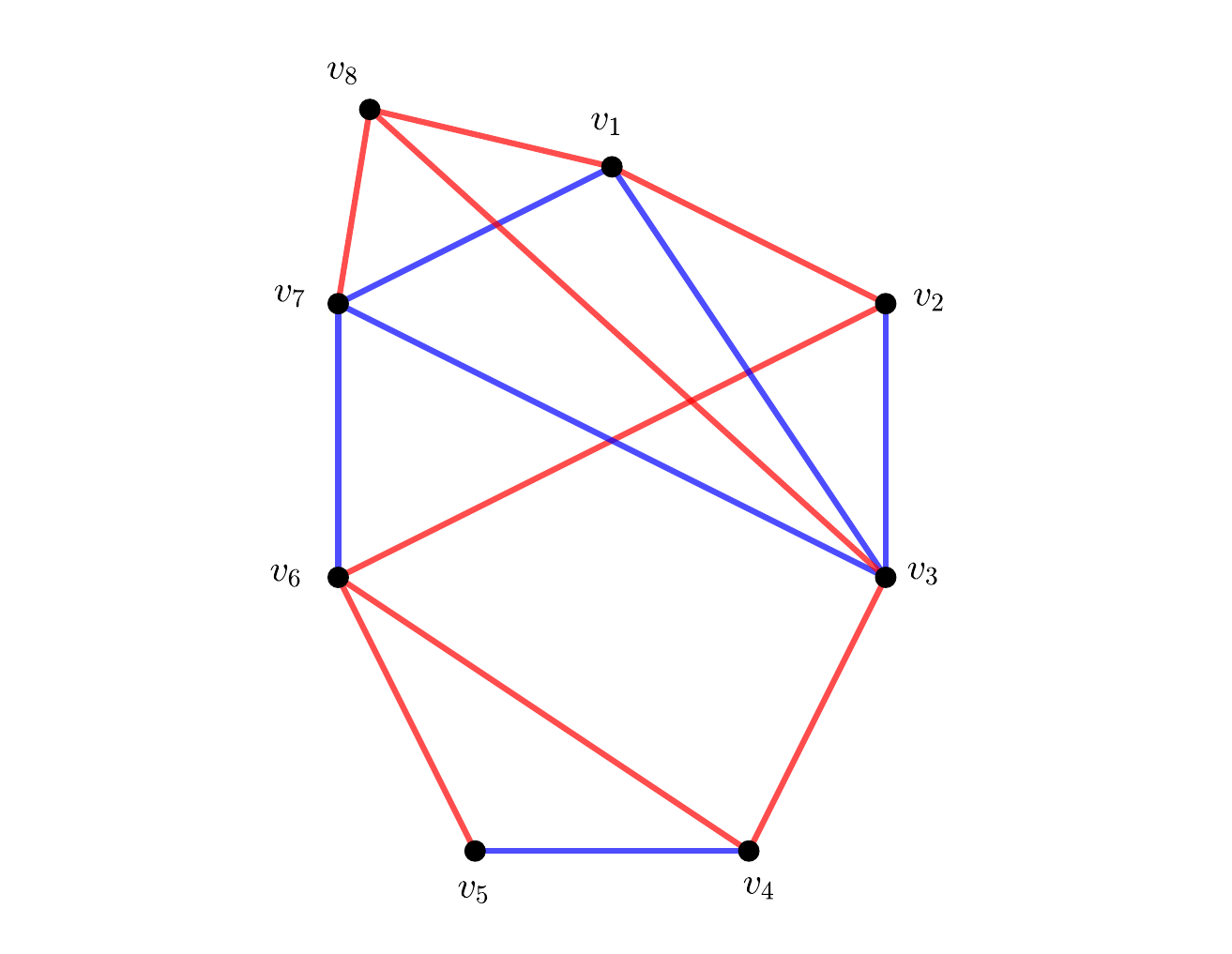}
	\caption{Signed graph $\g$.}\label{Ex2}
	\endminipage
\end{figure}
\begin{figure}[!htb]
	\minipage{0.62\textwidth}
	\includegraphics[width=\linewidth]{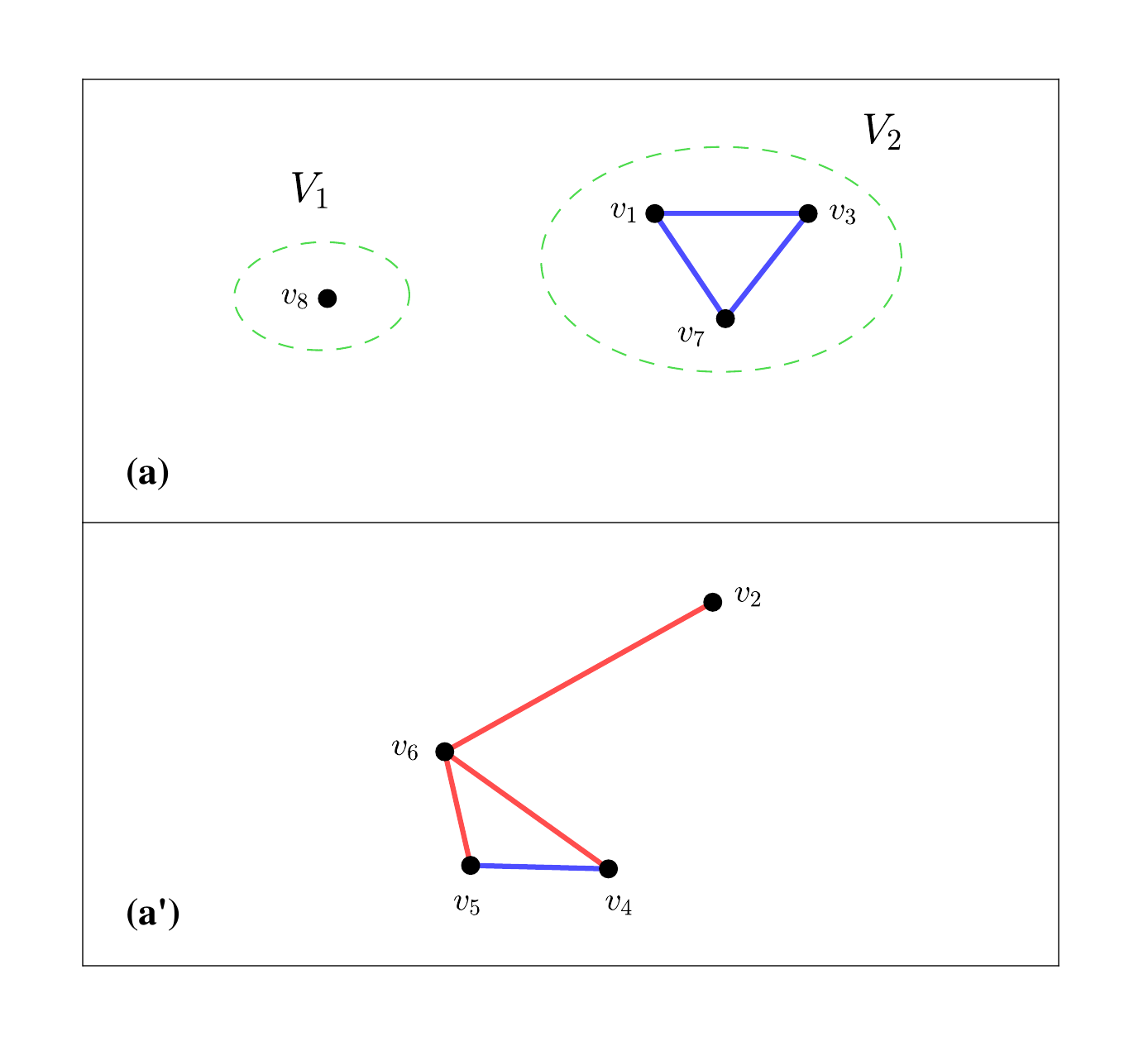}
	\caption{Steps (1) and (2) of C.C Algorithm for $\g$.}\label{ccalgo1}
	\endminipage
\end{figure}	
\begin{figure}[!htb]
	\minipage{0.62\textwidth}
	\includegraphics[width=\linewidth]{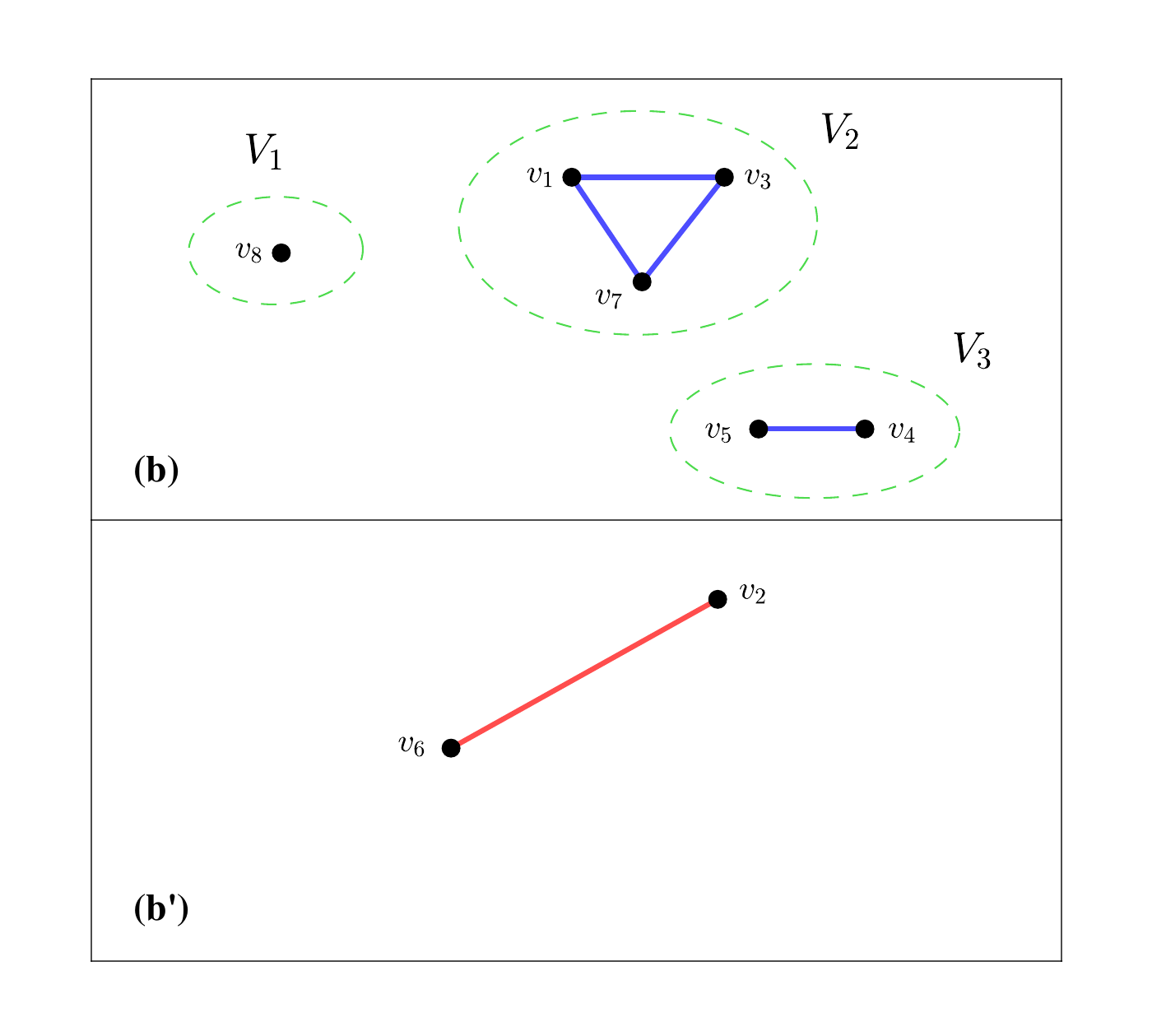}
	\caption{Steps (3) and (4) of C.C Algorithm for $\g$.}\label{ccalgo2}
	\endminipage
\end{figure}
\begin{figure}[!htb]
	\minipage{0.72\textwidth}
	\includegraphics[width=\linewidth]{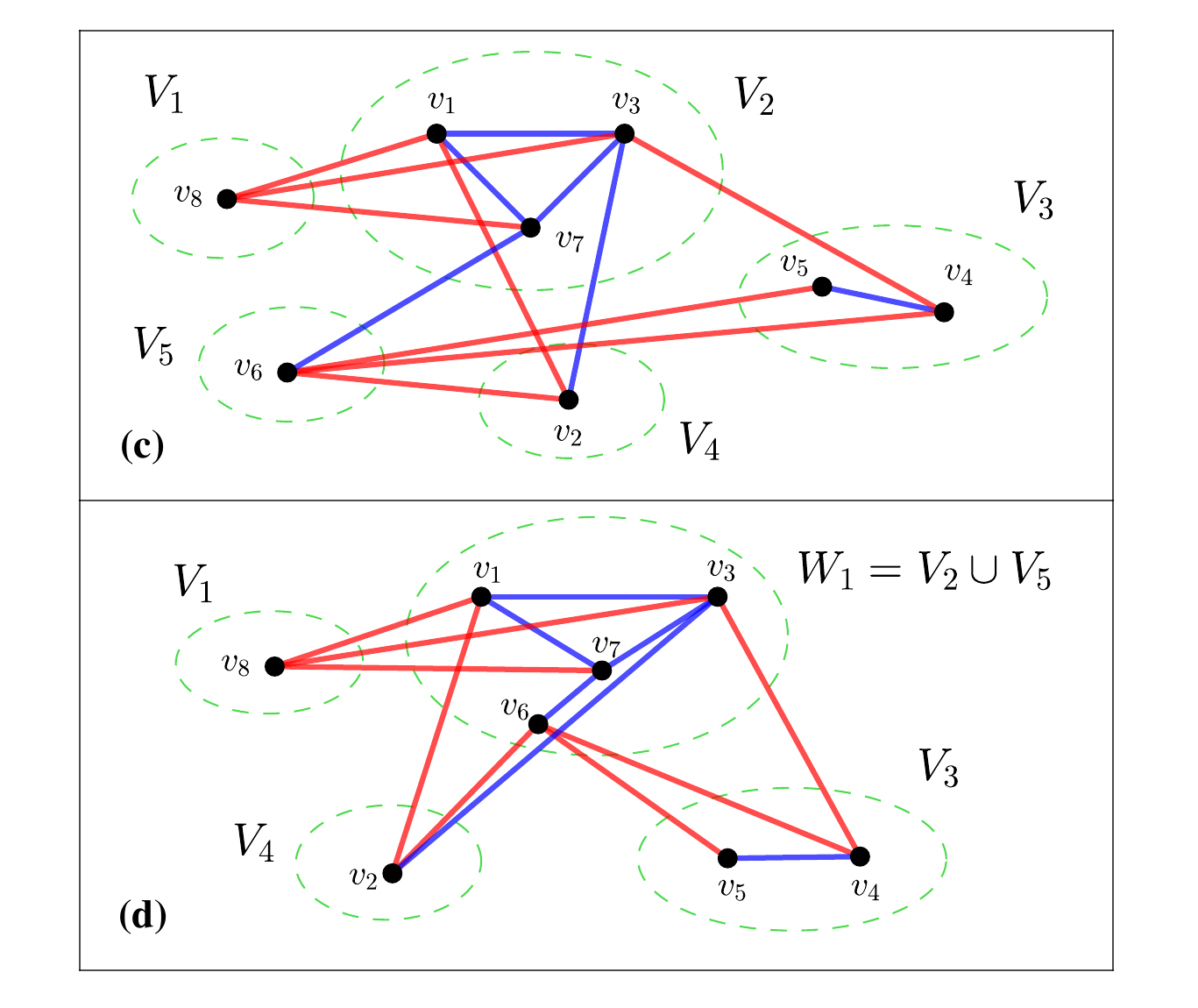}
	\caption{Correlation clustering of $\g$.}\label{ccalgo4}
	\endminipage
\end{figure}		
the graph shown in part (b$'$) in Fig. \ref{ccalgo2}) in a new cluster. So, we have a new cluster $V_4$ containing vertex $v_2$ and the other new cluster $V_5$ containing vertex $v_6$. Also, we add the edges of the initial signed graph $\g$ between the created clusters or inside them, see part (c) in Fig. \ref{ccalgo4}. By step (5), we should calculate the values $D^{+}(V_i, V_j)$ and $D^{-}(V_i, V_j)$ for $i \neq j,~ 2\leqslant i, j \leqslant 5$, for part (c) given in Fig. \ref{ccalgo4}. Note that if $D^{-}(V_i, V_j) \neq 0$, we cannot merge two clusters $V_i$ and $V_j$ together. 
	\begin{align*}	
		D^{-}(V_2, V_3) \neq 0, ~  D^{-}(V_2, V_4) \neq 0, ~ & D^{-}(V_2, V_5) = 0,\\
		& D^{+}(V_2, V_5) = 1.
	\end{align*}
	\begin{align*}	
	& D^{-}(V_3, V_4) = 0, ~  D^{-}(V_3, V_5) \neq 0, ~  D^{-}(V_4, V_5) \neq 0,\\
	& D^{+}(V_3, V_4) = 0.
\end{align*}
Based on the obtained values, we need to find clusters $V_s$ and $V_t$ with this property that $D^{+}(V_s, V_t) = \max_{i\neq j} D^{+}(V_i, V_j)$ such that $D^{-}(V_i, V_j)=0, ~ 2\leqslant i, j \leqslant 5$. Hence, it makes sense to merge clusters $V_2$ and $V_5$ together. By step (6) of C.C Algorithm, after merging clusters $V_2$ and $V_5$ into the new cluster $W_1$, we repeat the same process for the clusters $W_1, V_3$ and $V_4$ in part (d) of Fig. \ref{ccalgo4}. See the following process: 
	\begin{align*} 
		D^{-}(W_1, V_3) \neq 0, ~ D^{-}(W_1, V_4) \neq 0, ~  & D^{-}(V_3, V_4) = 0,\\
		& D^{+}(V_3, V_4) = 0.
	\end{align*}   
	Finally, according to the calculated values, merging the clusters $V_3$ and $V_4$ does not change the number of disagreements. However, if we want to reduce the number of final clusters, we can merge any pair of the clusters $V_1, V_3$ and $V_4$ (we can also merge $V_1, V_3$ and $V_4$ into one cluster) without increasing the number of disagreements. 
	By the way, part (d) in Fig. \ref{ccalgo4} shows the obtained solution by C.C Algorithm.
\end{exmp}

In the following, we are going to compare the obtained solution by C.C Algorithm with the optimal solution for a subclass of signed graphs. In fact, an optimal solution is a solution for correlation clustering that results in the minimum number of disagreements. Let $\g$ be a signed graph. In the rest of this paper, $Sol(\g)$ and $Opt(\g)$ denote the obtained solution by C.C Algorithm and the optimal solution, respectively for $\g$. Also, the number of disagreements in $Sol(\g)$ and $Opt(\g)$ are denoted by $D(Sol(\g))$ and $D(Opt(\g))$, respectively. 

By the following Lemmas, we give some more information which are needed to prove the main result, Theorem \ref{thm:finalth}. By Theorem \ref{thm:davis}, weakly negative cycles are the destructive cases to achieve a $k$-clusterable separation. 
Obviously maximum number of edge disjoint weakly negative cycles is a lower bound for minimum number of disagreements in any solution for any arbitrary signed graph. It makes sense that one thinks the disagreements in any clustering are associated with the (positive or negative) edges of weakly negative cycles. Also, we present a construction in part (iii) of Lemma \ref{lem:numneg1} showing generally the number of disagreements in the optimal solution is not equal to the maximum number of edge disjoint weakly negative cycles. Thus, $D(Opt(\g))$ might be greater than maximum number of edge disjoint weakly negative cycles. 
Moreover, we provide a necessary and sufficient condition for the equality of the lower bound, maximum number of edge disjoint weakly negative cycles, and minimum number of disagreements in optimal solution.

To separate the vertices of each of the signed graphs given in Fig. \ref{forbidg}, in the optimal solution one cannot avoid including negative edges inside a cluster. Otherwise, we cannot achieve the minimum number of disagreements. In the following, we explain how we obtain the signed graphs in Fig. \ref{forbidg} and their importance.

\begin{figure}[!htb]
	\minipage{0.90\textwidth}
	\includegraphics[width=\linewidth]{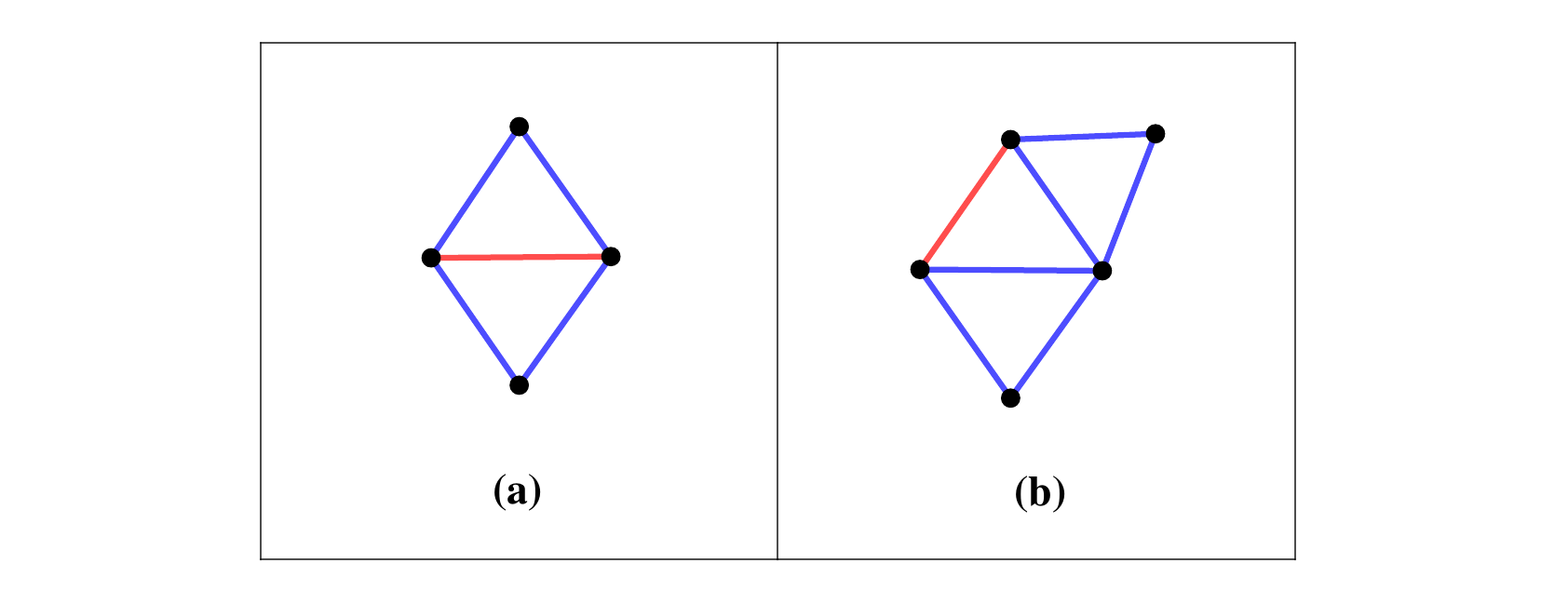}
	\caption{Two forbidden subgraphs.}\label{forbidg}
	\endminipage
\end{figure}

\begin{lem}\label{lem:forbidgraphs}
	Let $\g$ be a signed graph whose all induced strongly positive and weakly negative cycles are triangle. There exists an optimal clustering of $\g$ such that if there is a negative edge inside a cluster, it is one of the negative edges in given graphs in Fig. \ref{forbidg} as a subgraph of $\g$.  
\end{lem}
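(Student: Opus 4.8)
The plan is to start from an optimal clustering and repair it locally until every internal negative edge is forced. Concretely, among all optimal clusterings of $\g$ I would fix one, call it $\mathcal{C}$, that in addition minimizes the number of \emph{internal} negative edges, i.e. negative edges whose two endpoints lie in a common cluster. Since the number of disagreements is pinned at $D(Opt(\g))$, this secondary minimization is well defined and singles out an extremal optimum. The goal is then to show that in $\mathcal{C}$ every internal negative edge sits inside a copy of one of the two graphs of Fig.~\ref{forbidg} as a subgraph of $\g$.

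First I would record the effect of the simplest repair move. Suppose $uv$ is a negative edge with $u,v$ in a common cluster $C$, and for a vertex $w$ write $d^+_C(w)$ and $d^-_C(w)$ for the number of its positive, respectively negative, neighbours inside $C$. Moving $u$ out of $C$ into a new singleton cluster changes the number of disagreements by exactly $d^+_C(u)-d^-_C(u)$: the positive edges from $u$ into $C$ turn into inter-cluster disagreements, the negative ones turn into inter-cluster agreements, and no edge outside $C$ is affected. Optimality of $\mathcal{C}$ forces $d^+_C(u)\geq d^-_C(u)$; moreover, if equality held, this move would preserve optimality while destroying the internal negative edge $uv$ together with $d^-_C(u)\geq 1$ others, contradicting the extremal choice of $\mathcal{C}$. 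Hence $d^+_C(u)>d^-_C(u)\geq 1$, and symmetrically $d^+_C(v)>d^-_C(v)\geq 1$. In particular each of $u$ and $v$ has at least two positive neighbours inside $C$.

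The heart of the argument is to convert this degree information into one of the two forbidden configurations using the hypothesis that every induced strongly positive or weakly negative cycle of $\g$ is a triangle. Choosing positive neighbours of $u$ and of $v$ inside $C$, I would examine the edges between them and the signs they carry. If $u$ and $v$ share a positive neighbour $x$, then $uxv$ is a weakly negative triangle, and forcing two such triangles to share the edge $uv$ yields the first graph of Fig.~\ref{forbidg}. If the positive neighbours are distinct, then any positive connection from the $u$-side to the $v$-side closes a strongly positive or weakly negative cycle through $uv$; since such an induced cycle must be a triangle, the connecting vertices and the required chords are forced, and after discarding the chordless long cycles the only surviving possibility is the second graph of Fig.~\ref{forbidg}. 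To close each case I would also use the ``set'' repair move that splits $C$ along $uv$ into the part containing $u$ and the part containing $v$: whenever this split fails to increase disagreements it again contradicts extremality, and it is precisely the crossing positive edges prescribed by the forbidden graph that make the split strictly worse. Iterating the repair terminates because each application strictly lowers the number of internal negative edges while preserving optimality, and it delivers the desired optimal clustering.

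I expect the main obstacle to be the exhaustive case analysis in the third paragraph: one must show that the bounds $d^+_C(u),d^+_C(v)\geq 2$, together with the triangle hypothesis and the non-improvement of both the single-vertex and the cluster-splitting moves, leave no configuration other than the two drawn in Fig.~\ref{forbidg}. The delicate points are tracking the signs of the edges among the chosen positive neighbours, so as never to create a forbidden long induced cycle, and checking that in every configuration outside the two figures some repair move strictly decreases the internal negative edges without raising the disagreement count.
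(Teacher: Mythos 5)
Your setup coincides with the paper's own proof: it too fixes an optimal clustering with the minimum number of internal negative edges, uses the singleton-extraction move to force both endpoints of an internal negative edge to have underlying degree at least $3$ inside their cluster (your computation $d^+_C(u)>d^-_C(u)\geq 1$ is exactly that step), and uses splits along the negative edge to dispose of the case where the edge lies on no cycle in the cluster. The problem is the case analysis you defer to your third paragraph, which is where the lemma is actually proved, and your sketch of it is structurally misaligned with the two target graphs. Graph (a) of Fig.~\ref{forbidg} consists of two weakly negative triangles sharing the negative edge, i.e.\ it requires \emph{two} common positive neighbours of $u$ and $v$; sharing one positive neighbour $x$ does not by itself ``force'' a second shared triangle. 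Graph (b) is a weakly negative triangle $uxv$ with a strongly positive triangle hung on each of its two positive edges, so it occurs precisely inside your ``shared neighbour'' branch, not in the ``distinct neighbours'' branch where you place it. Indeed, if $u$ and $v$ share no positive neighbour in a cluster whose only internal negative edge is $uv$, then a shortest cycle through $uv$ inside the cluster would be chordless (all chords are positive there), hence a triangle by hypothesis, a contradiction; so $uv$ lies on no cycle of the cluster at all and the split move already contradicts extremality. The correct dichotomy, as in the paper, is: either a second common positive neighbour exists (graph (a)), or the additional positive neighbours $y\sim u$ and $z\sim v$ are both adjacent to $x$ (graph (b)); in every remaining configuration an explicit split such as $\{u,x,y\}$, $\{v,z\}$ removes the internal negative edge without increasing the number of disagreements.

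Second, you treat all clusters uniformly, but your passage from the degree bounds to a weakly negative triangle through $uv$ silently assumes that all other edges inside the cluster are positive. If the cluster contains a second negative edge, a cycle whose only negative edge is $uv$ may have a \emph{negative} chord; the two sub-cycles this chord creates each contain two negative edges, so the hypothesis on induced weakly negative cycles no longer applies and no weakly negative triangle through $uv$ is forced. The paper handles clusters with two or more internal negative edges by a separate case analysis on the weakly negative cycles they contain (sharing a common edge, pairwise adjacent without a common edge, or edge disjoint), reducing each case either to the single-negative-edge argument or to a repair that contradicts optimality or extremality. Your plan needs this branch; one cannot assume that the extremal optimum puts at most one negative edge in each cluster.
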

\begin{proof}
	Consider an optimal solution with the minimum number of negative edges inside the clusters. 
	Assume that there is a negative edge inside one of the clusters. The goal is to keep the minimum number of disagreements and avoid including the negative edges inside the clusters. Now, we are going to show that $\g$ contains at least one of the graphs in Fig. \ref{forbidg} as a subgraph or we can avoid including negative edges inside the clusters without increasing the number of disagreements. 
	Let $V_1$ be a cluster of the optimal solution which contains exactly one negative edge $e=v_1v_2$. If $e$ is not an edge of a cycle in $V_1$, then by adding a new cluster and separating the vertices of $e$ into two distinct clusters, we can avoid inserting the edge $e$ inside a cluster. Hence, $e$ must be an edge of a cycle with just one negative edge $e$. By assumption, $e$ is an edge of a weakly negative triangle $v_1v_2v_3$, but this is not all. If $v_1v_2v_3$ is a weakly negative triangle inside $V_1$, and the degree of $v_1$ (or $v_2$) inside $V_1$ is $2$, then actually we can remove $e$ from $V_1$ by adding a new cluster and putting $v_1$ in it without increasing the number of disagreements. Hence, the degrees of $v_1$ and $v_2$ in $V_1$ are at least $3$. If there is a vertex $v_4$ adjacent to both $v_1$ and $v_2$, we have graph (a) in Fig. \ref{forbidg} as a subgraph inside $V_1$. Otherwise, there are two vertices $v_4$ and $v_5$ inside $V_1$ distinct from $v_1$, $v_2$ and $v_3$ such that $v_4$ is adjacent to $v_1$ and $v_5$ is adjacent to $v_2$. By the former case, $v_4$ and $v_5$ are not allowed to be adjacent to $v_2$ and $v_1$, respectively. Also, if both $v_4$ and $v_5$ are not adjacent to $v_3$, we can separate these vertices into two clusters such that there is no negative edge inside a cluster. Without loss of generality, let $v_3$ and $v_4$ be adjacent, but $v_3$ and $v_5$ are not adjacent. Add a new cluster $V_2$ with $V_1=\{v_1, v_3, v_4\}$ and $V_2=\{v_2, v_5\}$.
	Moreover, if there is a path in $V_1$ from $v_4$ to $v_2$ or $v_3$ avoiding $v_1$, since all induced weakly negative and strongly positive cycles are triangle by taking chords, we also obtain the similar result. 
	Thus, $v_4$ and $v_5$ should be adjacent to $v_3$, and in this case one can find the graph (b) in Fig. \ref{forbidg} as a subgraph of $V_1$. 
	
	Now, if there exist two or more negative edges in a cluster, by Theorem \ref{thm:davis} each of the negative edges is an edge of a weakly negative cycle in the cluster. Furthermore, it is not possible to have a weakly negative cycle of order more than $3$ inside a cluster such that it contains more than one negative edges (as its edge and part of its chords) and includes only one weakly negative cycle. 
	Therefore, there are at least two weakly negative cycles in the cluster, and we have three options for weakly negative cycles: (i) they are adjacent with a common edge, (ii) they do not have a common edge, but each pair of them is adjacent, or (iii) they are edge disjoint weakly negative cycles. For (i), there is a separation of these vertices with only one disagreement (the common edge) associated with two adjacent weakly negative cycles or more than two weakly negative cycles with a common edge, and putting negative edges inside the cluster does not provide the optimal solution. For (ii), the number of disagreements associated with such case is at least $2$, but we can add new clusters and avoid inserting negative edges inside the clusters without increasing the number of disagreements. For (iii), we can repeat the proof of existence one negative edge inside a cluster for each of these negative edges. Besides, if we have a mix case containing at least two cases of these three mentioned cases, one by one we should consider them based on above explanations. 
	Note that by Theorem \ref{thm:davis}, we ignore induced cycles with two or more negative edges in a cluster. 
	
\end{proof}

\begin{rem}
	In Lemma \ref{lem:forbidgraphs}, note that if one of the positive edges of graph (a) belongs to another weakly negative cycle and is not over a strongly positive triangle, then we can avoid inserting negative edge associated with graph (a) inside the clusters. Also, if we add a negative edge to graph (b) we can also have a separation of vertices such that there is no negative edge corresponding to subgraph (b) inside the clusters. 
\end{rem}

\begin{figure}[!htb]
	\minipage{0.90\textwidth}
	\includegraphics[width=\linewidth]{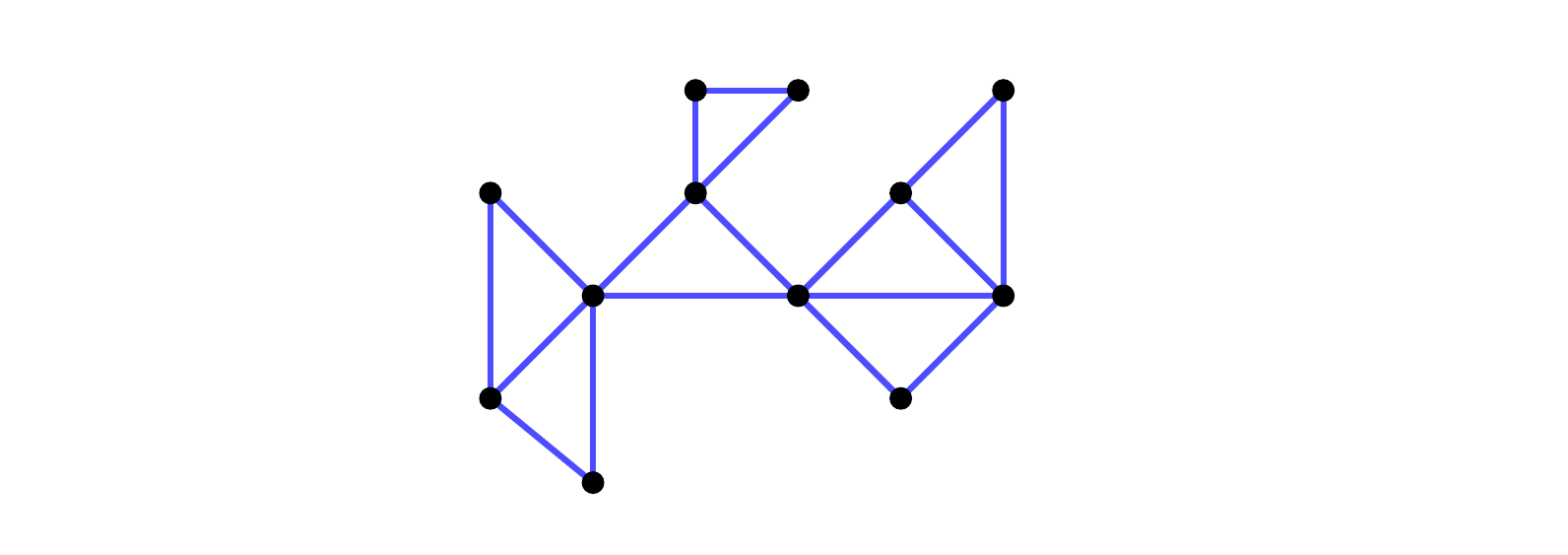}
	\caption{A chain of strongly positive triangles.}\label{tri_chain}
	\endminipage
\end{figure}

\begin{lem}\label{lem:avoidneg}
	Let $\g$ be a signed graph whose all induced strongly positive and weakly negative cycles are triangle.  
	Consider a subgraph of $\g$ containing a chain of strongly positive triangles which are connected to each other by a common edge or a vertex, and there are no other edges (negative or positive) in the subgraph, like given graph in Fig. \ref{tri_chain}. By adding any negative edge to this chain, one of the subgraphs in Fig. \ref{forbidg} is made.
\end{lem}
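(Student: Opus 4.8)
The plan is to follow the negative edge through the weakly negative cycle it is forced to create, collapse that cycle to a triangle using the standing hypothesis, and then read off a copy of subgraph (a) or (b) of Fig. \ref{forbidg} from the two strongly positive triangles of the chain that flank this triangle. Throughout I identify the triangle $v_1v_2v_3$ common to both pictures in Fig. \ref{forbidg} with the weakly negative triangle I produce.

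First I would fix the added negative edge $e=uv$. Since the chain is simple and all its edges are positive, $u$ and $v$ are non-adjacent in the chain, and as the chain is connected there is a positive path joining them. Taking a shortest positive $u$-$v$ path $P$, the cycle $C=P+e$ has $e$ as its only negative edge, so it is weakly negative; minimality of $P$ forbids any positive chord and $e$ is the sole negative edge, so $C$ is induced. By the hypothesis every induced weakly negative cycle is a triangle, hence $|P|=2$: there is a vertex $w$ with $uw,wv$ positive, and $uwv$ is a weakly negative triangle, giving $(v_1,v_2,v_3)=(u,v,w)$. Next I would invoke that every edge of the chain lies in a strongly positive triangle (the chain is a union of such triangles), applied to $uw$ and to $wv$: this yields strongly positive triangles $uwx$ and $wvy$, where necessarily $x\neq v$ and $y\neq u$, since otherwise $uv$ would be a positive chain edge. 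The case split is then clean. If $u$ and $v$ have a common neighbour $z\neq w$ (in particular whenever $x=y$), then $u,v,w,z$ together with the negative edge $uv$ and the positive edges $uw,vw,uz,vz$ realize subgraph (a). Otherwise $w$ is the unique common neighbour, forcing $x\neq y$ and making $u,v,w,x,y$ pairwise distinct, so that the negative edge $uv$ and the positive edges $uw,vw,ux,wx,vy,wy$ realize subgraph (b).

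The step I expect to be the main obstacle is the reduction that forces the common neighbour $w$: I must be careful that the induced weakly negative cycle created by $e$ is genuinely the object the hypothesis governs, i.e.\ that within the chain-plus-$e$ no edge shortcuts $P$ into a shorter non-triangular induced weakly negative cycle and that the hypothesis stated for $\g$ transfers to this configuration. Once $uwv$ is secured and the flanking strongly positive triangles $uwx,wvy$ are produced, the remaining work is only the bookkeeping that rules out coincidences among $\{u,v,w,x,y\}$; each such coincidence merely supplies a second common neighbour of $uv$ and hence collapses into subgraph (a), while the generic situation yields subgraph (b).
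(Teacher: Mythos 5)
Your proof is correct and is, at its core, the same argument as the paper's: force the new negative edge onto a weakly negative triangle $uwv$, take the two strongly positive triangles of the chain containing the edges $uw$ and $wv$, and read off subgraph (a) or (b) of Fig.~\ref{forbidg} according to whether $u$ and $v$ have a second common neighbour. The paper packages this as an induction on the number $p$ of triangles in the chain, but that induction is vacuous (its inductive step invokes only the base case $p=2$), so your direct version loses nothing; if anything it is tighter, since your shortest-path argument (a minimal positive $u$--$v$ path plus $e$ yields an induced weakly negative cycle, hence a triangle) justifies a step the paper merely asserts, and your explicit case split makes precise what the paper's base case dismisses as obvious.
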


\begin{proof}
	Consider a chain with maximum number of strongly positive triangles, and add a negative edge to this chain. Since all induced weakly negative cycles are triangle, the negative edge is over a weakly negative triangle. Now, we prove the assertion by induction on $p$, the number of strongly positive triangles in the chain. If $p=2$, there is a chain with two strongly positive triangles. Then we have two cases for strongly positive triangles, (i) they are common in an edge, (ii) they are common in a vertex. In both cases by adding a negative edge, obviously we can find one of the graphs in Fig. \ref{forbidg} as a subgraph. Assume that the assertion is true for any chain containing $p$ strongly positive triangles for $p\geqslant 2$. Now, consider a chain which the maximum number of strongly positive triangles is $p+1$, and add a negative edge to this chain. This negative edge creates a weakly negative triangle with two positive edges in the chain. Because all induced weakly negative cycles in $\g$ are triangle, and also the chain contains maximum number of strongly positive triangles. Consider $v_1v_2v_3$ as the weakly negative triangle with negative edge $v_1v_2$. We know that $v_1v_3$ and $v_2v_3$ belong to two strongly positive triangles in the chain. Consider those two strongly positive triangles and the negative edge. By the first step of induction (for $p=2$) one of the graphs (a) or (b) in Fig. \ref{forbidg} is a subgraph of $\g$. 
\end{proof}

We can see the correlation clustering problem with a different view. Consider a signed graph $\g$, and let $U$ be a subset of $E$ such that if we remove the edges in $U$ from $\g$, the obtained signed graph has no weakly negative cycle, see \cite{Zaslavsky}. Now, if a set $U$ has the minimum size with this property, then we have $|U| = D(Opt(\g))$. We know that for each edge disjoint weakly negative cycle there is a disagreement in the optimal solution. Then, there is an edge in $U$ corresponding to each edge disjoint weakly negative cycle in $\g$. In Lemma \ref{lem:negdisad}, we show that if all weakly negative cycles in $\g$ are edge disjoint, that is, there are no two adjacent weakly negative cycles in $\g$, then minimum number of disagreements is equal to the number of weakly negative cycles in $\g$. But, if we have two or more weakly negative cycles with a common edge or more than two weakly negative cycles such that each pair of them is adjacent, we should consider possible cases and find th appropriate edge(s) as members of $U$. See Lemma \ref{lem:numneg1}.  

\begin{lem}\label{lem:negdisad}
	Let $\g$ be a signed graph whose all weakly negative cycles are edge disjoint. That is, there are no two adjacent weakly negative cycles in $\g$. Then, minimum number of disagreements is equal to the number of weakly negative cycles in $\g$.	
\end{lem}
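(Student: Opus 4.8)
The plan is to prove the equality by squeezing $D(Opt(\g))$ between two quantities that both collapse to the number $N$ of weakly negative cycles precisely because of the edge-disjointness hypothesis: a packing lower bound and a covering upper bound. Throughout I write $N$ for the number of weakly negative cycles of $\g$.

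For the lower bound I would use the observation recalled before the lemma that the maximum number of edge disjoint weakly negative cycles is a lower bound for $D(Opt(\g))$. Since by hypothesis no two weakly negative cycles are adjacent, the whole family of weakly negative cycles is already pairwise edge disjoint, so this maximum packing is exactly $N$; hence $D(Opt(\g)) \geq N$. If one prefers a self-contained argument, fix any clustering and any weakly negative cycle $v_1 v_2 \cdots v_\ell v_1$ with unique negative edge $v_1 v_2$: were there no disagreement on this cycle, the negative edge would force $v_1$ and $v_2$ into different clusters while the positive path $v_2 v_3 \cdots v_\ell v_1$ would force them into the same cluster, a contradiction. Each weakly negative cycle thus carries a disagreement, and edge-disjointness makes these disagreements pairwise distinct, again giving $D(Opt(\g)) \geq N$.

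For the upper bound I would appeal to the reformulation stated just before the lemma, namely that $D(Opt(\g))$ equals the minimum size of an edge set $U$ whose deletion leaves no weakly negative cycle. It then suffices to exhibit such a $U$ of size $N$: selecting one edge from each of the $N$ weakly negative cycles yields, by edge-disjointness, exactly $N$ distinct edges, and deleting them removes an edge from every weakly negative cycle. Since deleting edges never creates new cycles, the resulting signed graph has no weakly negative cycle, so this $U$ is feasible and $D(Opt(\g)) \leq |U| = N$.

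Combining the two inequalities gives $D(Opt(\g)) = N$. The step that deserves the most care is the claim that deleting one edge per cycle eliminates \emph{every} weakly negative cycle and not merely the $N$ chosen representatives; this is exactly where the hypothesis is used, since it guarantees that every weakly negative cycle already belongs to the edge-disjoint family, so none escapes deletion and, by monotonicity of cycles under edge removal, none is newly created. I expect this verification to be the only real (and mild) obstacle, and it is precisely the point at which the packing bound and the covering bound are forced to coincide.
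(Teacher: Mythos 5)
Your proof is correct and takes essentially the same approach as the paper: the packing lower bound (each weakly negative cycle forces a disagreement, and edge-disjointness makes these distinct) combined with an upper bound obtained by deleting one edge per weakly negative cycle and observing that, since edge deletion creates no new cycles and every weakly negative cycle of $\g$ has been hit, none survives. The only cosmetic differences are that the paper deletes specifically the negative edge of each cycle and turns the deletion set into a clustering explicitly via Theorem \ref{thm:davis}, whereas you route that same step through the minimum deletion-set reformulation ($|U|=D(Opt(\g))$) stated just before the lemma.
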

\begin{proof}
	We know that maximum number of edge disjoint weakly negative cycles is a lower bound for minimum number of disagreements. Now, consider weakly negative cycles one by one and delete an edge (negative edge) of each of them such that it causes the corresponding weakly negative cycle to disappear. If there are two adjacent weakly negative cycles, only in this case, by removing an edge of one of the weakly negative cycles, another weakly negative cycle may remain. So, by removing negative edges of weakly negative cycles, the obtained signed graph from $\g$ does not contain any weakly negative cycle. So, by Theorem \ref{thm:davis} the obtained graph is $k$-clusterable for some $k$, and there is a separation of vertices with no disagreement. Then add the removed negative edges to this clustering. Hence, the number of weakly negative cycles is an upper bound for minimum number of disagreements in $\g$. We conclude that minimum number of disagreements of $\g$ is equal to the number of weakly negative cycles in $\g$. 
\end{proof}

\begin{lem}\label{lem:numneg1}
	Let $\g$ be a signed graph. 
	\begin{itemize}
		\item[(i).] Let $H$ be an induced subgraph of $\g$. If minimum number of disagreements of $H$ is $k$, then there are $k$ disagreements in the optimal clustering of $\g$ associated with $H$. 
		\item[(ii).] Let $\g$ contain two adjacent weakly negative cycles such that $\g$ has no other weakly negative cycle. Then, $D(Opt(\g)) =1$. Moreover, if $\g$ contains a number of weakly negative cycles such that all of them are in a sequence and only two consecutive ones are adjacent, minimum number of disagreements is equal to maximum number of edge disjoint weakly negative cycles. And, $\g$ does not contain any other weakly negative cycle. 
		\item[(iii).] Let $\g$ have just one edge disjoint weakly negative cycle but contain three weakly negative cycles, so there are two options: (a) these three weakly negative cycles are common in at least one edge, (b) these three weakly negative cycles do not have a common edge, but each pair of them is adjacent. For case (a), $D(Opt(\g)) = 1$, but for case (b), $D(Opt(\g)) = 2$.
		
	\end{itemize}
\end{lem}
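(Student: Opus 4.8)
My plan is to work throughout with the reformulation recalled just before Lemma~\ref{lem:negdisad}: a clustering with $D$ disagreements corresponds to a set $U \subseteq E$ of $D$ edges whose deletion leaves a signed graph with no weakly negative cycle, and conversely, so that $D(Opt(\g))$ equals the minimum size of such a ``hitting'' set $U$. I will combine this with two facts used repeatedly: Theorem~\ref{thm:davis}, by which a signed graph admits a clustering with zero disagreements exactly when it has no weakly negative cycle, and the stated lower bound that any family of pairwise edge-disjoint weakly negative cycles forces at least that many edges into $U$ (one per cycle). Part (i) is then a locality statement: given any clustering $\mathcal{C}$ of $\g$, restrict the partition to $V(H)$; since $H$ is induced, an edge of $H$ is a disagreement of $\mathcal{C}$ (a negative edge inside a cluster or a positive edge across clusters) if and only if it is a disagreement of the restricted clustering $\mathcal{C}|_{V(H)}$ of $H$. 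Hence the disagreements of $\mathcal{C}$ lying in $E(H)$ are exactly the disagreement set of a clustering of $H$, whose size is at least $k$; applying this to an optimal clustering of $\g$ gives that at least $k$ disagreements are associated with $H$.

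For part (ii), I first take the two adjacent cycles sharing a common edge $e$. Deleting $e$ destroys both, and since $\g$ has no other weakly negative cycle and edge deletion creates no new cycles, the result has no weakly negative cycle; by Theorem~\ref{thm:davis} and the $U$-reformulation this yields $D(Opt(\g)) \leq 1$, while the presence of a weakly negative cycle forces $D(Opt(\g)) \geq 1$. For the chain $C_1, \ldots, C_m$ in which only consecutive cycles are adjacent, I would show both bounds meet $\lceil m/2 \rceil$. The selection $C_1, C_3, C_5, \ldots$ is edge-disjoint because non-consecutive cycles are non-adjacent, giving the lower bound $\lceil m/2 \rceil$ on $D(Opt(\g))$. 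For the matching upper bound I use a greedy induction on $m$: delete the shared edge of $C_1$ and $C_2$ (killing both) and recurse on the shorter chain $C_3, \ldots, C_m$, producing a hitting set of size $\lceil m/2 \rceil$. Since the maximum edge-disjoint packing never exceeds the minimum hitting set, all three quantities coincide, which is precisely the asserted equality.

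For part (iii), case (a) mirrors the two-cycle case: an edge $e$ common to all three cycles has $U=\{e\}$ destroying all of them, so $D(Opt(\g)) \leq 1$, while a surviving weakly negative cycle otherwise forces $D(Opt(\g)) \geq 1$. In case (b) the upper bound is again direct, deleting a shared edge of $C_1, C_2$ and one edge of $C_3$ to get a hitting set of size $2$. The crucial point is the lower bound $D(Opt(\g)) \geq 2$: because no single edge lies in all three cycles, for every edge $e$ at least one of $C_1, C_2, C_3$ avoids $e$ and hence survives its deletion, so no single-edge $U$ can hit all three. This is exactly the phenomenon flagged in the introduction, where $D(Opt(\g)) = 2$ strictly exceeds the maximum edge-disjoint packing, which is $1$.

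The main obstacle I anticipate is precisely the two places where the cycle-packing lower bound is not tight. In part (iii)(b) packing alone gives only $1$, so $D(Opt(\g)) \geq 2$ must be argued directly at the level of the hitting set, using that the three pairwise-adjacent cycles share no common edge; I must verify that ``pairwise adjacent with no common edge'' genuinely rules out a one-edge hitting set and that deleting the chosen two edges leaves no residual weakly negative cycle, so that the hypothesis that these are \emph{all} the weakly negative cycles is used essentially. Likewise, in the chain case of part (ii) the care lies in confirming that deleting a shared edge kills exactly the two consecutive cycles and no farther one, so that the induction on $m$ runs cleanly and the greedy hitting set really attains $\lceil m/2 \rceil$.
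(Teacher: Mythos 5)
Your proposal is correct and follows essentially the same route as the paper: part (i) by restricting a clustering of $\g$ to $V(H)$, parts (ii) and (iii) by viewing $D(Opt(\g))$ as a minimum edge set hitting all weakly negative cycles, deleting common edges for the upper bounds, and using the edge-disjoint-packing lower bound (plus the no-common-edge observation in (iii)(b)). The only difference is that you carry out explicitly what the paper leaves as ``one can check''---the $\lceil m/2\rceil$ induction for the chain in (ii) and the argument that no single edge can hit three pairwise-adjacent cycles with no common edge---which is a welcome tightening, not a different approach.
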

\begin{proof}
	\begin{itemize}
		\item[(i).]
		Consider an optimal clustering for $H$, and apply the same separation to vertices of $H$ in $\g$. Note that there is no separation of vertices of $H$ in $\g$ such that minimum number of disagreements associated with $H$ is less than $k$. If there is such separation, we can use it for the signed graph $H$, and it is in conflict with the assumption. 
		
		\item[(ii).] 
		Consider two adjacent weakly negative cycles with at least one common edge (positive or negative). It is easy to check that if we remove one of the common edge, $\g$ does not contain any weakly negative cycle. Thus, $D(Opt(\g))=1$. Besides, if $\g$ contains a sequence of such adjacent weakly negative cycles, it is sufficient to remove the minimum number of common edges of the adjacent weakly negative cycles which is equal to maximum number of edge disjoint weakly negative cycles so that the weakly negative cycles disappear.
		
		\item[(iii).] Assume that we have three weakly negative cycles (with just one edge disjoint weakly negative cycle). If these three weakly negative cycles have at least one common edge, $D(Opt(\g))=1$, and by deleting one of the common edges all three weakly negative cycles disappear. Now, if these three weakly negative cycles do not have a common edge, since there is just one edge disjoint weakly negative cycle in $\g$, each pair of the three weakly negative cycles is adjacent. One can check that by removing any edge (one edge) at least one weakly negative cycle remains, so $D(Opt(\g)) > 1$. But, by removing two suitable edges among common edges, the obtained signed graph from $\g$ does not include any weakly negative cycle. Hence, $D(Opt(\g))=2$. 
	\end{itemize}
\end{proof}

\begin{figure}[!htb]
	\minipage{0.95\textwidth}
	\includegraphics[width=\linewidth]{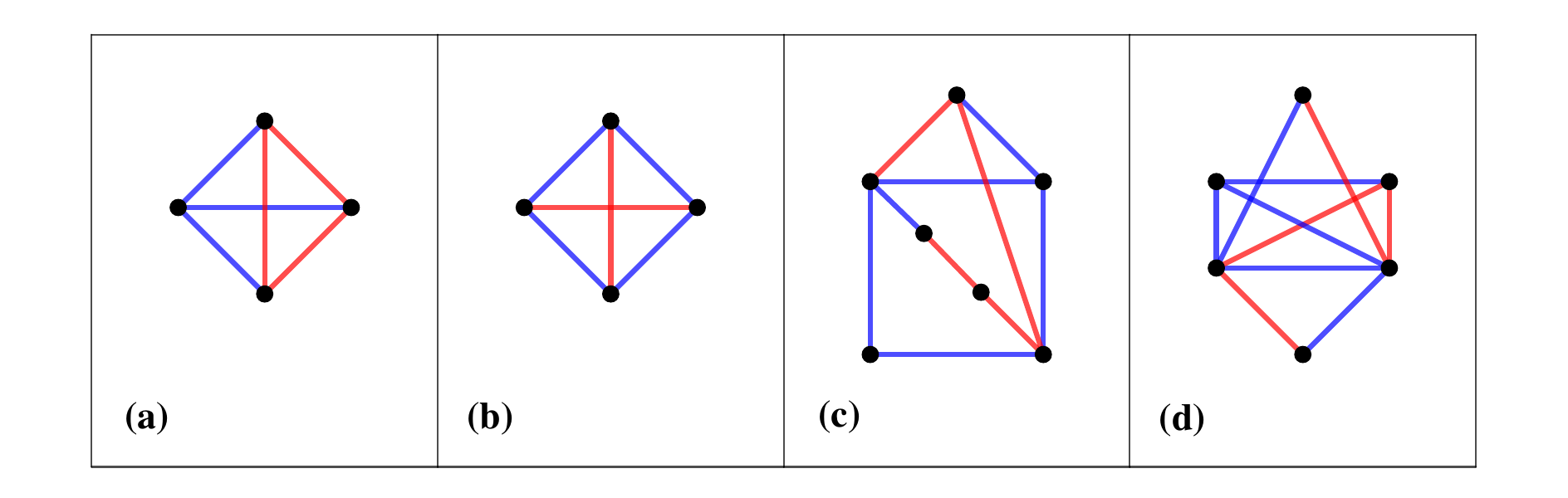}
	\caption{Signed graphs whose minimum number of disagreements is larger than the number of edge disjoint weakly negative cycles.}\label{example}
	\endminipage
\end{figure}

According to part (iii) of Lemma \ref{lem:numneg1}, we can find many examples that minimum number of disagreements is not equal to the maximum number of edge disjoint weakly negative cycles. As some examples, see graphs given in Fig. \ref{example}. Each of the graphs (a), (b) and (c) contains exactly one edge disjoint weakly negative cycle, but minimum number of disagreements is $2$. Graph (d) includes two edge disjoint weakly negative cycles but minimum number of disagreements is $3$. 

\begin{thm}\label{thm:condition}
	Let $\g$ be a signed graph. $\g$ does not contain any three weakly negative cycles without a common edge such that each pair of them is adjacent (that is, if there are three weakly negative cycles in $\g$ such that each pair of them is adjacent, then those three weakly negative cycles have a common edge), if and only if minimum number of disagreements is equal to maximum number of edge disjoint weakly negative cycles in $\g$.
\end{thm}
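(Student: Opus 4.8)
Throughout, write $\nu(\g)$ for the maximum number of edge-disjoint weakly negative cycles, and recall from the discussion preceding Lemma \ref{lem:negdisad} that $D(Opt(\g))$ equals the minimum size of an edge set $U$ whose removal leaves a signed graph with no weakly negative cycle. Thus $D(Opt(\g))$ is the minimum \emph{transversal} (hitting set) of the family of weakly negative cycles, while $\nu(\g)$ is the maximum \emph{packing} of that family, and $\nu(\g)\le D(Opt(\g))$ always holds. The plan is to read the stated condition as a Helly-type property: it asserts that any three pairwise-adjacent weakly negative cycles already share an edge common to all three. I will prove the two implications separately, using the contrapositive for the ``only if'' direction.

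First I would show that violating the condition forces the strict inequality $\nu(\g)<D(Opt(\g))$. Suppose $\g$ contains weakly negative cycles $C_1,C_2,C_3$ that are pairwise adjacent but share no common edge, and let $H$ be the induced subgraph they span. By Lemma \ref{lem:numneg1}(iii)(b) we have $D(Opt(H))=2$ while $H$ carries only one edge-disjoint weakly negative cycle, so locally the transversal exceeds the packing by one; by Lemma \ref{lem:numneg1}(i) these two disagreements persist in $Opt(\g)$. To globalize this I would argue by contradiction: if $\nu(\g)=D(Opt(\g))$, fix a maximum packing $\F$ and a minimum transversal $U$ of equal size. Since each of the $\nu(\g)$ edge-disjoint cycles of $\F$ must contain at least one, and hence a distinct, edge of $U$, a counting argument forces a bijection in which every cycle of $\F$ meets $U$ in exactly one edge and every edge of $U$ lies on exactly one cycle of $\F$. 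But $U$ must hit all of $C_1,C_2,C_3$, and no single edge does so, so $U$ uses at least two edges inside $H$, forcing two distinct cycles of $\F$ to run through $H$; I expect to derive a contradiction from this together with the fact that the pairwise-adjacent $C_1,C_2,C_3$ supply only one cycle to any edge-disjoint packing, yielding $\nu(\g)<D(Opt(\g))$.

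For the ``if'' direction I would assume the Helly condition and prove $D(Opt(\g))=\nu(\g)$ by induction on $\nu(\g)$. When $\nu(\g)=0$ the graph has no weakly negative cycle, so it is $k$-clusterable by Theorem \ref{thm:davis} and both sides vanish; the case of pairwise edge-disjoint cycles is exactly Lemma \ref{lem:negdisad}. For the inductive step, group the weakly negative cycles by the transitive closure of the adjacency relation and consider one such \emph{linked family} $\F$. Here the Helly hypothesis does the real work: it should force $\F$ to be either a bundle of cycles through one common edge, or a chain in which only consecutive cycles are adjacent. In the first case a single edge destroys all of $\F$ (Lemma \ref{lem:numneg1}(ii),(iii)(a)); in the second, Lemma \ref{lem:numneg1}(ii) gives that the number of common edges to delete equals the number of edge-disjoint cycles in $\F$. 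Either way the minimum number of edges needed to kill $\F$ equals the packing number of $\F$. Deleting those edges removes all cycles of $\F$ and, since deleting edges can never create a weakly negative cycle, leaves a graph whose packing and transversal numbers have both dropped by the same amount; the inductive hypothesis then closes the count.

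The hard part will be the inductive step of the ``if'' direction, namely proving that the single forbidden triple is the \emph{only} obstruction. I must show that under the Helly condition every adjacency-linked family of weakly negative cycles really does reduce to a common-edge bundle or a chain, ruling out every other way in which three or more cycles could interlock, and that resolving one family by edge deletions does not disturb the packing--covering balance among the remaining families. Controlling this interaction between distinct families, together with verifying that ``bundle or chain'' is genuinely forced by forbidding only the pairwise-adjacent-without-common-edge triple, is where the structural analysis underlying Lemmas \ref{lem:forbidgraphs} and \ref{lem:avoidneg} will be needed. A secondary, more routine difficulty is tightening the bijection bookkeeping in the ``only if'' direction, so that the local deficit supplied by Lemma \ref{lem:numneg1}(iii)(b) is guaranteed to survive in the global count rather than being absorbed by cycles elsewhere in $\g$.
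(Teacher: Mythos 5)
Your plan retraces the paper's own proof in both directions: your ``if'' direction (decompose the weakly negative cycles into adjacency-linked families, use the hypothesis to extract one common edge per family, delete it and recurse) is exactly the paper's argument via induced subgraphs containing a single edge-disjoint weakly negative cycle, and your ``only if'' direction (local deficit from Lemma \ref{lem:numneg1}(iii)(b) propagated by Lemma \ref{lem:numneg1}(i), then a global counting argument) is the paper's contradiction argument. However, the two steps you defer as ``the hard part'' are genuine gaps, not bookkeeping, and they are precisely the steps the paper itself does not prove (it asserts the common-edge extension ``according to the assumption,'' and settles the globalization with ``Obviously, this is a contradiction''). For the first: knowing that every \emph{three} pairwise-adjacent cycles share an edge does not give a single edge common to a whole pairwise-adjacent family --- there are set systems in which every three of four sets have a common element but all four do not --- so the ``bundle or chain'' structure you hope to force needs an argument specific to weakly negative cycles that neither you nor the paper supplies.

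The second gap is worse than hard: it cannot be filled, because the implication ``equality $\Rightarrow$ no bad triple'' is false. Take $V=\{1,2,3,4,5\}$ with positive edges $12,13,14,35,45$ and negative edges $23,24,34$. The weakly negative cycles are the triangles $123$, $124$, $134$, $345$ and the two $5$-cycles $1\text{-}2\text{-}3\text{-}5\text{-}4\text{-}1$ and $1\text{-}2\text{-}4\text{-}5\text{-}3\text{-}1$. The triangles $123,124,134$ are pairwise adjacent with no common edge, i.e.\ a forbidden triple is present; yet the maximum number of edge-disjoint weakly negative cycles is $2$ (take $123$ and $345$) and the minimum number of disagreements is also $2$: the clustering $\{1,3,4,5\},\{2\}$ has exactly the disagreements $12$ and $34$, and one disagreement is impossible since no single edge meets all of $123,124,134$. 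Your bijection bookkeeping breaks exactly where you predicted it might: with $U=\{12,34\}$ and $\F=\{123,345\}$ the forced bijection exists, the two edges of $U$ inside the triple's subgraph do lie on two distinct packing cycles, but one of them ($345$) is not among $C_1,C_2,C_3$, so no contradiction follows --- the local deficit of the bad triple is absorbed by the surrounding cycles, which is exactly the ``absorption'' you flagged as a residual worry. So in this direction the statement itself, and with it the paper's proof, is broken rather than merely incomplete, and no tightening of the counting argument can rescue it.
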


\begin{proof}
	Assume that all three weakly negative cycles in $\g$, with this property that each pair of them is adjacent, have a common edge. Consider an induced subgraph $F$ containing just one edge disjoint weakly negative cycle which may be adjacent to a number of weakly negative cycles. It is obvious that each pair of these weakly negative cycles is also adjacent, so according to the assumption, all the weakly negative cycles in $F$ have a common edge. Removing the common edge makes all weakly negative cycles in $F$ disappear. Apply this method to other induced subgraphs of $\g$ containing just one edge disjoint weakly negative cycle to remove all the weakly negative cycles in $\g$. For the converse, let minimum number of disagreements be equal to maximum number of edge disjoint weakly negative cycles in $\g$. Assume that $\g$ has a subgraph $H$ containing three weakly negative cycles without a common edge such that each pair of weakly negative cycles is adjacent, and $H$ does not contain any other weakly negative cycle. By part (iii) of Lemma \ref{lem:numneg1} the minimum number of disagreements of $H$ is two, and also by part (i) of Lemma \ref{lem:numneg1}, there are two disagreements associated with $H$ in $\g$ while it has only one edge disjoint weakly negative cycle. Obviously, this is a contradiction. Hence, if there are three weakly negative cycles in $\g$ such that each pair of them is adjacent, then those three weakly negative cycles have a common edge.
\end{proof}

We need the following remark to prove the results presented in Theorem \ref{thm:finalth}.

\begin{rem}\label{rem:rth}
	Let $\g$ be a signed graph whose all induced strongly positive and weakly negative cycles are triangle. Let $e$ be a disagreement of $Sol(\g)$. We have the following notes: 
	\begin{itemize}
		\item Although each induced weakly negative cycle in $\g$ is triangle, it does not mean that every edge disjoint weakly negative cycle in $\g$ is a triangle. As a simple example, the signed graph given in Fig. \ref{negtri} has two edge disjoint weakly negative cycles of orders $3$ and $4$, and the number of its disagreements is $2$. Note that this signed graph has just one edge disjoint weakly negative triangle.  
		
	\end{itemize}
	Now, let $e$ be a disagreement of $Opt(\g)$.
	\begin{itemize}
		\item If $e$ is an edge over a weakly negative cycle (not a triangle), then there are two options: (i) $e$ is over a strongly positive triangle, as an example see Fig. \ref{negtri}, (ii) $e$ is an edge of an induced cycle whose containing at least two negative edges. For (i), $e$ is not a disagreement in the obtained solution since by C.C Algorithm we put the vertices of any strongly positive triangle in a cluster. For (ii), corresponding to the disagreement $e$ in $Opt(\g)$, there is a disagreement in $Sol(\g)$, but it might not be $e$. Because in this situation C.C Algorithm picks a maximum matching, and then suitable clusters are merged. 	
	\end{itemize}
\end{rem}

\begin{figure}[!htb]
	\minipage{0.80\textwidth}
	\includegraphics[width=\linewidth]{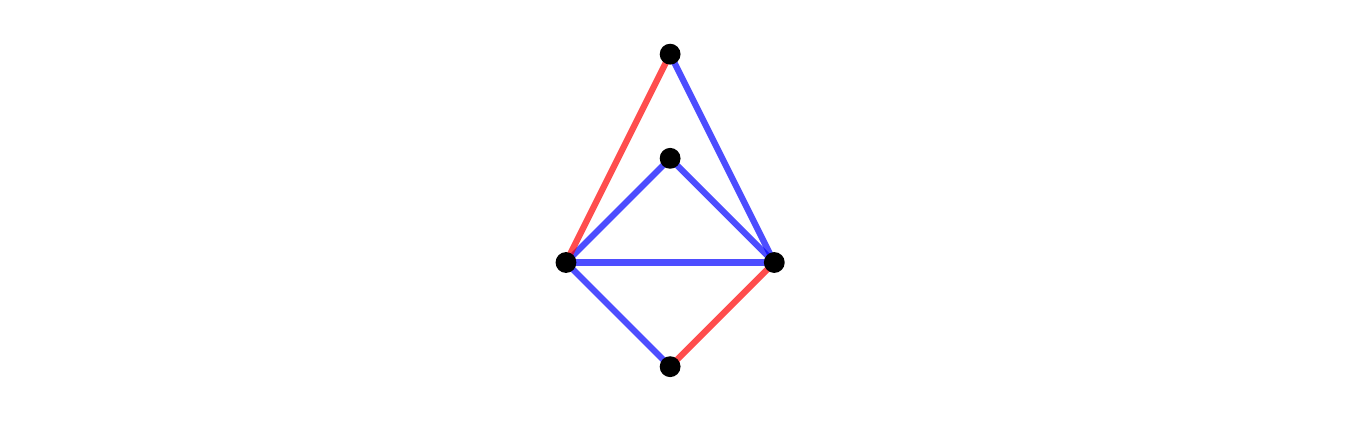}
	\caption{A signed graph with two disagreements.}\label{negtri}
	\endminipage
\end{figure}
\begin{figure}[!htb]
	\minipage{0.90\textwidth}
	\includegraphics[width=\linewidth]{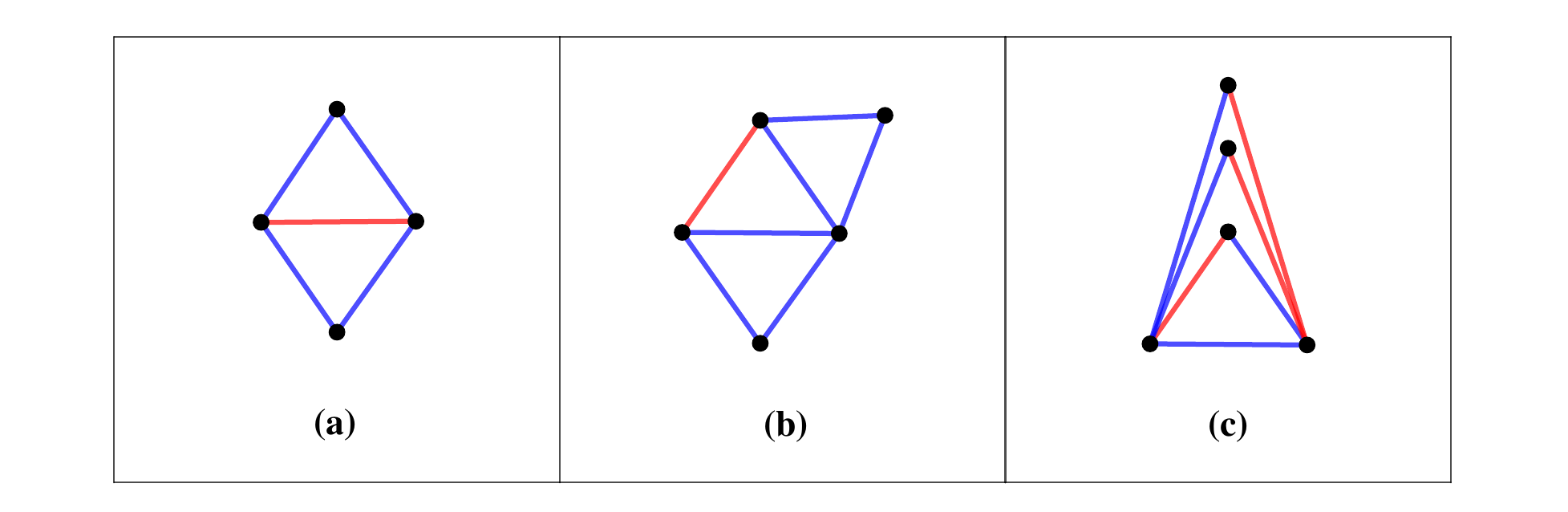}
	\caption{Three forbidden subgraphs. Graph (c) contains any three weakly negative triangles with a common positive edge.}\label{thforbid}
	\endminipage
\end{figure}
\begin{thm}\label{thm:finalth}
	Let $\g$ be a signed graph whose all induced strongly positive and weakly negative cycles are triangle. Consider graphs given in Fig. \ref{thforbid} as forbidden subgraphs for $\g$.
	Then C.C Algorithm gives a $2$-approximation for correlation clustering of $\g$.
\end{thm}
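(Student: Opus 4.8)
The plan is to prove the two bounds $D(Opt(\g)) \geq \nu$ and $D(Sol(\g)) \leq 2\,D(Opt(\g))$, where $\nu$ denotes the maximum number of edge disjoint weakly negative cycles in $\g$; the first is the lower bound already recorded in the text (each edge disjoint weakly negative cycle forces at least one disagreement in every clustering), so the whole weight of the argument rests on the second. I would establish it by a charging argument: assign each disagreement of $Sol(\g)$ to a weakly negative triangle carrying it, organized so that no representative of a maximum edge disjoint family of such triangles is charged more than twice. Since the triangles of distinct optimal disagreements are edge disjoint and, by Lemma \ref{lem:numneg1}(i), the disagreements of $Opt(\g)$ localize on these weakly negative structures, a factor-two cap on the charges gives $D(Sol(\g)) \leq 2\,D(Opt(\g))$ directly.

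First I would localize the disagreements of $Sol(\g)$. A disagreement is either a positive edge between two clusters or a negative edge inside a cluster. The matching of step (3) joins its two vertices along a positive edge, and the merges of steps (5)--(6) combine only clusters between which the negative degree vanishes; hence neither step ever places a negative edge inside a cluster, and a negative intra-cluster disagreement can arise only from a step-(2) cluster (a chain of strongly positive triangles). By Lemma \ref{lem:avoidneg} such a trapped negative edge forces one of the subgraphs of Fig.\ \ref{forbidg}, and in precisely that situation Lemma \ref{lem:forbidgraphs} shows the optimal clustering is itself forced to carry a negative edge; I would therefore charge each negative intra-cluster disagreement of $Sol(\g)$ to the weakly negative triangle on which it sits, matched against the corresponding forced disagreement of $Opt(\g)$. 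For the positive inter-cluster disagreements, the hypothesis that all induced weakly negative cycles are triangles together with Remark \ref{rem:rth} shows each such edge lies on a weakly negative triangle $T$ whose negative edge the algorithm has placed between clusters, and I charge it to $T$.

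The factor two now comes from the fact that a triangle has only two positive edges, so a single $T$ absorbs at most two positive charges. The delicate point is that distinct weakly negative triangles may share a positive (or negative) edge, so I must verify that the charges distribute so that each representative of a maximum edge disjoint family receives at most two in total. This is exactly where the forbidden subgraphs enter. If a single edge is shared by three weakly negative triangles -- graph (c) of Fig.\ \ref{thforbid} for a shared positive edge -- then the maximum matching of step (3) can cluster only one of the incident positive edges, the two remaining positive edges become disagreements, and the merges of steps (5)--(6) cannot eliminate them because a negative edge separates the clusters in question; the single edge disjoint triangle of this block would then absorb three charges. Forbidding this configuration, together with the remaining subgraphs (a) and (b) of Fig.\ \ref{thforbid}, which rule out the other minimal blocks of pairwise adjacent weakly negative triangles on which the matching step cannot prevent three positive disagreements from being attributed to one edge disjoint triangle, restores the cap of two per representative and yields $D(Sol(\g)) \leq 2\,D(Opt(\g))$.

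The main obstacle is this last step: bounding, uniformly over every admissible choice of the maximum matching in step (3) and every outcome of the merge phase in steps (5)--(6), the number of positive inter-cluster disagreements that can be charged to one edge disjoint weakly negative triangle. What the argument really turns on is a case analysis of how a block of pairwise adjacent weakly negative triangles interacts with a maximum matching -- which of its positive edges can be simultaneously clustered, and which surviving disagreements the merges remove -- and a verification that, once the configurations of Fig.\ \ref{thforbid} are excluded, every such block admits a charging with at most two charges per edge disjoint representative. Subtleties from triangles that share only a vertex (hence lie in different blocks yet compete for the global matching) must also be absorbed into this analysis.
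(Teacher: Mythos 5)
There is a genuine gap: your proposal defers exactly the step that constitutes the paper's proof. You reduce the theorem to the claim that, over every admissible maximum matching in step (3) and every outcome of the merge phase, each block of pairwise adjacent weakly negative triangles admits a charging with at most two disagreements of $Sol(\g)$ per edge disjoint representative --- and then you name this as ``the main obstacle'' without carrying it out. But that claim \emph{is} the theorem; everything before it is bookkeeping. The paper's proof consists precisely of this case analysis: it fixes a disagreement $e$ of $Opt(\g)$ (shown to be a positive inter-cluster edge via Lemma \ref{lem:forbidgraphs} together with forbidden subgraphs (a) and (b) of Fig. \ref{thforbid}), and then splits into Case (1) ($e$ lies on exactly one weakly negative triangle, giving at most two $Sol$ disagreements as in Fig. \ref{options}), Case (2) ($e$ is the common positive edge of two weakly negative triangles, Fig. \ref{twotri}, where the interaction with the step-(3) matching edges $xw$, $up$, $xf$, $yq$ is analyzed explicitly and forbidden subgraph (c) is what licenses the final merge), and Case (3) ($e$ on a longer weakly negative cycle). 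Without this analysis your writeup is a restructuring of the problem, not a proof.

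Two further points. First, you charge against $\nu$, the maximum number of edge disjoint weakly negative cycles, aiming at $D(Sol(\g)) \leqslant 2\nu \leqslant 2D(Opt(\g))$. This is strictly stronger than what is needed, since $\nu < D(Opt(\g))$ is possible in general (part (iii) of Lemma \ref{lem:numneg1} and Fig. \ref{example}); to use it you would have to show that the hypotheses of the theorem force the equality condition of Theorem \ref{thm:condition}, which you do not do. The paper avoids this entirely by charging $Sol$ disagreements to $Opt$ disagreements directly. Second, your handling of negative intra-cluster disagreements of $Sol(\g)$ misreads Lemma \ref{lem:forbidgraphs}: that lemma asserts the \emph{existence} of an optimal clustering whose intra-cluster negative edges sit on the Fig. \ref{forbidg} configurations; it does not assert that the presence of such a configuration \emph{forces} $Opt(\g)$ to carry a negative edge inside a cluster, so your proposed one-to-one matching of ``forced'' disagreements has no basis. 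In any case the issue is moot: graphs (a) and (b) of Fig. \ref{thforbid} are exactly the Fig. \ref{forbidg} configurations, so under the hypotheses they do not occur in $\g$ at all, and by Lemma \ref{lem:avoidneg} no negative edge ever ends up inside a step-(2) cluster; all disagreements of $Sol(\g)$ are positive, which is how the paper opens its proof.
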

\begin{proof}
	Let $e$ be a disagreement of $Sol(\g)$. In this step, we present all types of disagreements of $Opt(\g)$ corresponding to a disagreement of $Sol(\g)$. See the following notes: 
	\begin{itemize}
		\item According to the steps of C.C Algorithm and Lemma \ref{lem:avoidneg}, we can avoid inserting negative edges inside the clusters. Hence, $e$ is not a negative edge inside a cluster of $Sol(\g)$, so all disagreements in $Sol(\g)$ are positive.
		
		\item Positive edges of induced cycles with at least two negative edges (which are not over weakly negative cycles) are not disagreements of $Sol(\g)$ because choosing the maximum matching of positive edges and placing each of them in a cluster prevents this from happening. If a positive edge is not a member of the maximum matching and is not an edge of a strongly positive triangle or a weakly negative triangle, in step (4) it adds to one of the clusters. 
		
		\item If $e=xy$ is an edge of a weakly negative triangle, then we have three options for the optimal solution listed in Fig. \ref{options}. Note that by Lemma \ref{lem:forbidgraphs} there is no negative edge inside the clusters in optimal solution.
			
		\item If $e$ is not an edge of a weakly negative triangle, then it is an edge of a weakly negative cycle of order more than $3$, as well as it is an edge of an induced cycle with at least two negative edges, see Remark \ref{rem:rth}.
	\end{itemize}
\begin{figure}[!htb]
	\minipage{0.90\textwidth}
	\includegraphics[width=\linewidth]{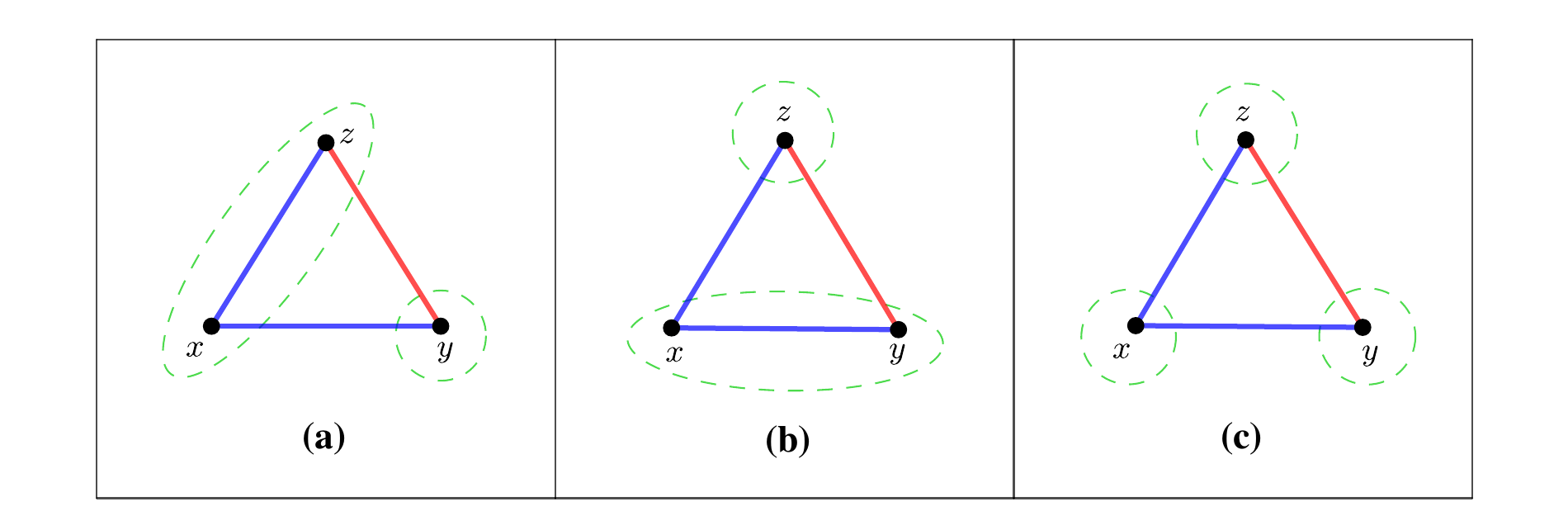}
	\caption{A disagreement in a cluster.}\label{options}
	\endminipage
\end{figure}	
	In this step, we verify the existence of disagreements of $Sol(\g)$ based on a disagreement in $Opt(\g)$. Actually, this step is the converse of the former step. Now, let $e=xy$ be a disagreement of $Opt(\g)$. By Lemma \ref{lem:forbidgraphs} and the forbidden subgraphs (a) and (b) in Fig. \ref{thforbid}, $e$ cannot be a negative edge inside a cluster. So, the disagreement $e$ is a positive edge between two clusters. \\
	Case (1): if $e$ is an edge of exactly one weakly negative triangle, then there are at most two disagreements in $Sol(\g)$ corresponding to $e$, see Fig. \ref{options}. 
	
	\begin{figure}[!htb]
		\minipage{0.90\textwidth}
		\includegraphics[width=\linewidth]{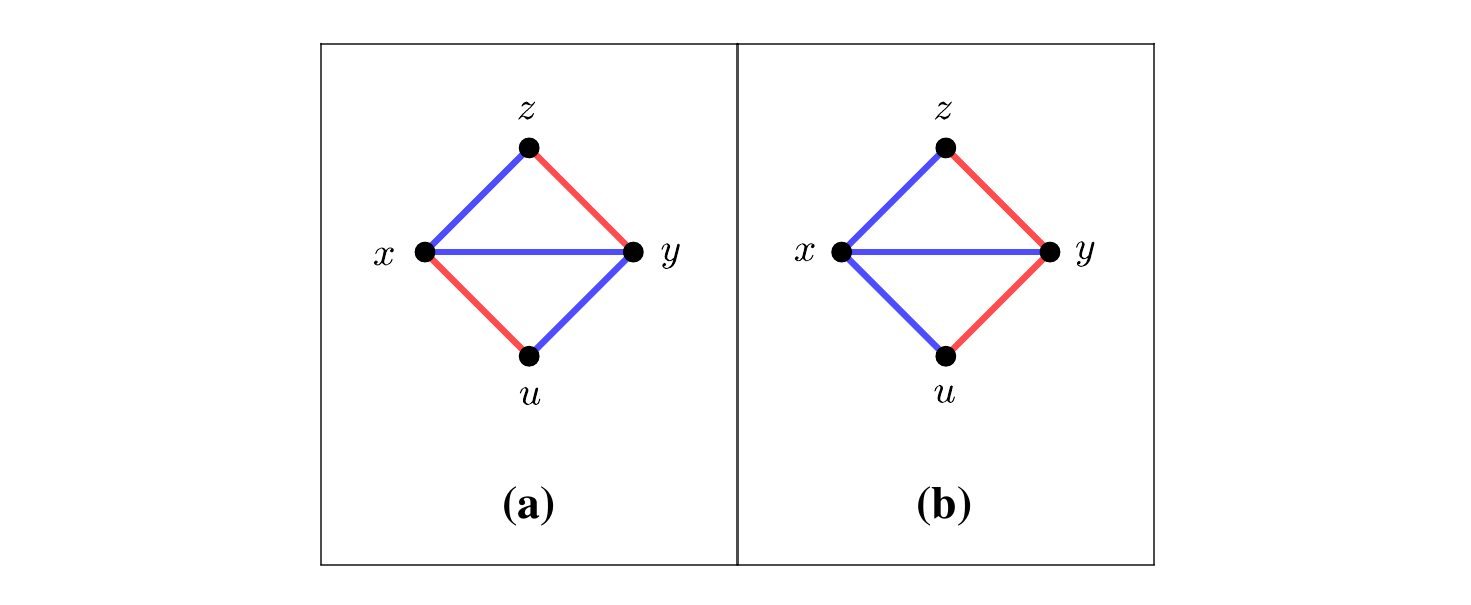}
		\caption{Two weakly negative triangles with a common positive edge.}\label{twotri}
		\endminipage
	\end{figure}

	Case (2): if $e$ is a common edge between two weakly negative triangles, see Fig. \ref{twotri}. We should compute the number of disagreements in $Sol(\g)$ associated with $e$. For graph (a), let the edges $xy, xz$ and $yu$ be three disagreements in $Sol(\g)$. Then at least two vertices of $x, y, z, u$ must be over two edges of the matching considered in step (3) of C.C Algorithm. If they are $x$ and $u$, denote them by $e_1=xw$ and $e_2=up$, for some $w,~p\in V$. Eventually, we have two options here (i) without loss of generality, suppose that $wz$ is a negative edge, then we are not allowed to merge the vertex $z$ in the cluster containing $e_1$. So, the number of edge disjoint weakly negative triangles is two and we have three disagreements. And, (ii) if we can merge the vertex $z$ in the cluster containing $e_1$. The number of disagreements is at most two, so we achieve a contradiction because we suppose that the edges $xy, xz$ and $yu$ are three disagreements in $Sol(\g)$. Moreover, we have a similar condition for $e_2$ and $y$. Hence, considering all cases for subgraph (a) in Fig. \ref{twotri} implies that just in one case the number of disagreements is $3$, when there exists another weakly negative triangle containing $xz$ or $yu$. Now consider graph (b) in Fig. \ref{twotri}. If at least one of the edges $xz$, $xy$ or $xu$ is an element of the maximum matching, then the number of disagreements is at most $2$. If non of these edges is an element of the maximum matching, and $e_3=xf$ for some $f\in V$ is an element of maximum matching, then we can merge the vertex $z$ in the cluster containing $e_3$, unless without loss of generality, $fz$ is a negative edge. In this condition, the number of edge disjoint weakly negative triangles is two and the number of disagreements is $3$. Now, let $yq$ for some $q\in V$ be an element of maximum matching. By the forbidden subgraph (c) in Fig. \ref{thforbid}, we can merge $x$ in the cluster containing $yq$, and the number of disagreements is two. \\
	Case (3): if $e$ is an edge of a weakly negative cycle $C_1$ of order more than $3$ such that $e$ is also an edge of an induced cycle with at least two negative edges. If none of the positive edges of $C_1$ and its chords is an edge of other weakly negative cycles, then for $Sol(\g)$ also there is only one disagreement associated with this weakly negative cycle. But, if at least one of the positive edges is an edge of another weakly negative cycle, the number of disagreements over $C_1$ or its chords may increase based on the number of other weakly negative cycles adjacent with $C_1$.  
	Finally, we can conclude that $D(Sol(\g)) \leqslant 2D(Opt(\g))$. 
\end{proof}

\section{Conclusion}

	 In this paper, we presented an algorithm for correlation clustering in general case (C.C Algorithm on page 4). Also, we showed that there is a necessary and sufficient condition under which the lower bound, maximum number of edge disjoint weakly negative cycles, is equal to minimum number of disagreements, see Theorem \ref{thm:condition}. Moreover, we proved that the C.C Algorithm provides a $2$-approximation for a subclass of signed graphs, see Theorem \ref{thm:finalth}. 
	 
	 As a result of Theorem \ref{thm:finalth}, if $\g$ is a signed chordal graph and graphs given in Fig. \ref{thforbid} are forbidden subgraphs for $\g$, then C.C Algorithm gives a $2$-approximation for correlation clustering of $\g$. Now, why are chordal graphs important? 
	 Chordal (also known as a triangulated) graphs are one of the most extensively studied classes of graphs. Chordal graphs arise in many practical and relevant fields such as computing the solutions of systems of linear equations, in database management systems, VLSI, biology, and so on. We refer the reader to \cite{chorsur} for further applications of chordal graphs. 
	 
	 In addition, the complexity of the C.C Algorithm considering the condition given in Theorem \ref{thm:finalth} is $\mathcal{O}(n^3)$. Actually, the complexity for detecting all chains of strongly positive triangles is $\mathcal{O}(n^3)$, and the complexity of finding a maximum matching among the positive edges is also $\mathcal{O}(n^3)$. 
	Besides for signed graphs without weakly negative cycles, the C.C Algorithm results in a clustering without any disagreements. Because there are no negative edges inside the clusters and between each pair of clusters there are only positive edges or only negative edges, not both. Otherwise, it creates at least one weakly negative cycle. Hence, we can merge clusters that only have positive edges between them. Finally, we separate the vertices into a number of clusters without any disagreements.
	
	As we know, in 2006, Demaine et al. in \cite{cc06} have shown that the problem for general graphs is equivalent to minimum multicut problem, so it is APX-hard and difficult to approximate better than $\Theta(\mathrm{log}n)$. They also gave an $\mathcal{O}(\mathrm{log}n)$-approximation algorithm for general graphs. Moreover, they provided an $\mathcal{O}(r^3)$-approximation algorithm for $K_{r,r}$-minor-free graphs. In 2021, Mandaglio et al. proved for general graphs satisfying Global Weight Bound (GWB) condition, there is a $5$-approximation algorithm for correlation clustering. Of course, signed graphs (not signed complete graphs) do not satisfy the GWB condition unless we change the edge weights.\\
	Our main goal is to improve results in correlation clustering for signed general graphs, so for the next step, it is worthwhile to work on a larger subclass or other subclasses of signed general graphs such as signed chordal graphs without forbidden subgraphs.

\end{document}